\documentclass{article}
\usepackage{arxiv}

\usepackage{amsmath,amsfonts,amssymb,amsthm}
\usepackage{lipsum}
\usepackage{amsfonts}
\usepackage{graphicx}
\usepackage{amsmath}
\usepackage{amssymb}
\usepackage{epstopdf}
\usepackage{mathrsfs}
\usepackage{algorithmic}
\ifpdf
  \DeclareGraphicsExtensions{.eps,.pdf,.png,.jpg}
\else
  \DeclareGraphicsExtensions{.eps}
\fi

\usepackage{enumitem}
\setlist[enumerate]{leftmargin=*}
\setlist[itemize]{leftmargin=*}

\RequirePackage{algorithm}[0.1]
\usepackage{graphics,graphicx}
\usepackage{epstopdf}
\usepackage{hyperref}
\usepackage{cleveref}

\usepackage{amsopn}
\DeclareMathOperator{\diag}{\mathrm{diag}}
\DeclareMathOperator{\R}{\mathbb{R}}

\newtheorem{Th}{Theorem}[section]
\newtheorem{Proposition}[Th]{Proposition}
\newtheorem{Lemma}[Th]{Lemma}

\newcommand{\algname}{Bilevel VarPro}

\title{Constrained Variable Projection for Structured Problems} 

\author{%
\textbf{Emanuele Zangrando}\\
Gran Sasso Science Institute,  L'Aquila, Italy.\\ 
    \And
    \textbf{Sara Venturini}\\
    MOBS Lab, Northeastern University, Boston, USA. \\
    \AND
    \textbf{Francesco Rinaldi}\\
    University of Padova, Padova, Italy.\\
    \And 
    \textbf{Francesco Tudisco}\\
    Gran Sasso Science Institute, L'Aquila, Italy.\\
    University of Edinburgh, Edinburgh, UK.
}

\begin{document}

\maketitle

\begin{abstract}

Variable projection is a classical technique for separable nonlinear least-squares problems, in which variables that enter linearly are eliminated exactly, yielding a reduced nonlinear problem. By expressing this framework as a particular instance of a broader class of bilevel optimization problems, we develop a constrained variable-projection framework for data-science models, where the remaining variables are subject to convex constraints and the eliminated variables arise from a lower-level least-squares problem. In particular, by interpreting variable projection as a collapsed bilevel optimization problem, we derive exact reduced-gradient formulas compatible with automatic differentiation and propose a conditional-gradient algorithm for the resulting constrained reduced problem. We establish convergence guarantees under standard smoothness and compactness assumptions, and discuss extensions to structured lower-level variables. Numerical experiments on sparse autoencoding, dictionary learning, blind deconvolution, and few-shot learning suggest that the method can improve wall-clock efficiency and data efficiency relative to natural joint-optimization baselines.
\end{abstract}

\section{Introduction}\label{sec:introduction}
Many problems in modern data science involve fitting models with two distinct types of variables: a variable block that enters a loss function linearly or quadratically, and a second block of nonlinear variables that controls features, representations, constraints, or physical parameters. This structure appears in applications such as dictionary learning \cite{Rubinstein_DL}, inverse problems \cite{Chavent2009-aa}, signal recovery, representation learning \cite{snlls21}, and neural network training \cite{ruthotto_trainvarpro}. In these settings, the linear variable can often be optimized exactly once the nonlinear variables are fixed, yielding a reduced optimization problem of smaller dimension.

A classical framework for exploiting this structure is the variable projection method, or VarPro, initially introduced for separable nonlinear least-squares problems \cite{Gobub_Pereyra_varpro,Gobub_Veque_varpro,golub2003separable}. In its standard form, VarPro applies to problems of the type
\begin{equation}\label{eq:joint_formulation}
    \min_{w \in \R^{N},\,\theta \in \R^{p}} \frac12 \|M(\theta)w-y(\theta)\|^2 + \frac{\lambda}{2}\|\Omega w\|^2 + \frac{\mu}{2}R(\theta),
\end{equation}
where $w$ is a linear variable and $\theta$ is a nonlinear variable. For fixed $\theta$, the minimizer with respect to $w$ can be computed by solving a least-squares problem. Substituting this minimizer into the objective produces a reduced problem in $\theta$ alone. This reduction can substantially decrease the dimension of the optimization problem and can improve numerical conditioning and computational efficiency.

The classical VarPro theory, however, is primarily designed for \emph{unconstrained} separable nonlinear least-squares problems. This leaves open an important question: how should variable projection be used when the remaining nonlinear variables are constrained? Such constraints are common and often essential, as structured feasible sets arise naturally in signal processing, inverse problems, and machine learning. For instance, sparsity, rank, and norm constraints promote efficiency, interpretability, and regularization \cite{candes2011robust,donoho2006compressed,hastie2015statistical,recht2010guaranteed,tibshirani1996regression,zangrando2025debora}, while simplex, box, and non-negativity constraints encode feasibility or statistical structure \cite{carlini2017towards,duchi2008efficient,gillis2020nonnegative,jaggi2014equivalence,lim2016box}.

The goal of this paper is to develop a constrained variable-projection framework for such problems. We reinterpret and generalize VarPro through a collapsed bilevel optimization problem: the lower-level problem eliminates the variable $w$ through a structured least-squares solve, while the upper-level problem optimizes the remaining variable $\theta$ over a convex feasible set. This viewpoint allows us to combine
the dimension-reduction benefits of VarPro with projection-free constrained optimization methods. In particular, we derive reduced-gradient formulas that can be evaluated efficiently using a combination of closed-form lower-level solvers and automatic differentiation through vector-Jacobian products. We then use these gradients inside a conditional gradient, or Frank-Wolfe, method for the reduced constrained problem.

The resulting framework is motivated by applications in which the eliminated variable has a clear statistical or computational meaning. In sparse autoencoding, the eliminated variable may correspond to a linear decoder or readout map. In dictionary learning, the eliminated variables encode representation coefficients or dictionary-dependent least-squares updates. In blind deconvolution and inverse problems, they correspond to structured linear operators or signal components. In few-shot learning, a pretrained nonlinear representation can be combined with an optimized linear head, leading naturally to a variable-projection formulation. 

\subsection{Contributions}
The main contributions of this paper are as follows.
\begin{itemize}
    \item We observe that the problem treated in the original Variable-Projection works is the collapsed version of a specific class of bilevel optimization problems. Following this insight, we formulate constrained variable projection as a collapsed bilevel optimization problem, in which the lower-level problem is a structured least-squares problem and the upper-level problem imposes convex constraints on the remaining variables.

    \item We derive explicit reduced-gradient formulas for the constrained reduced objective. The formulas combine closed-form lower-level solutions with automatic differentiation through vector-Jacobian products, avoiding, when possible, the need to differentiate naively through ill-conditioned normal equations.

    \item We propose a tailored conditional-gradient method for the constrained reduced problem. The method is projection-free and is therefore well-suited to feasible sets for which linear minimization oracles are cheaper than Euclidean projections. We further establish convergence guarantees for the proposed method under standard assumptions.

    \item We demonstrate the flexibility of the framework on representative data-science and machine-learning problems, including sparse autoencoding, dictionary learning, blind deconvolution, and few-shot learning with a
    pretrained neural representation.
\end{itemize}

\subsection{Organization}
The rest of the paper is organized as follows. In
\Cref{sec:related-work}, we discuss related work on variable projection, bilevel optimization, implicit differentiation, and conditional-gradient methods. In \Cref{sec:problem_formulation}, we introduce the general constrained variable-projection problem formulation and its bilevel interpretation. We then derive
the reduced-gradient expressions used by our method and discuss extensions to structured lower-level variables. The convergence analysis of the proposed conditional-gradient scheme is presented next. Finally, we report numerical experiments on several representative data-science problems.

\section{Related Work}\label{sec:related-work}
The variable projection method was developed for separable nonlinear least-squares problems in which some variables enter the model linearly and can therefore be eliminated exactly. The observation that such models can be reduced to a nonlinear problem in the remaining variables appears in \cite{lawton_varpro}, which credits an unpublished 1965 work by N.\,E.~Dahl for the original idea. Golub and Pereyra developed the method systematically in \cite{Gobub_Pereyra_varpro}, deriving formulas for the Jacobian of the reduced residual and proposing practical Gauss-Newton-type algorithms. The method and
its developments are surveyed in \cite{golub2003separable}.

A key computational issue in VarPro is the cost of forming the reduced Jacobian. Kaufman introduced a cheaper approximation for the Jacobian under a small-residual assumption \cite{Kaufman1975}. More recent work has revisited this approximation and its effect on convergence. In particular, \cite{CHEN2025112300} studies the large-residual regime and proposes improved variants with stronger convergence
properties, \cite{espanol2025local} analyzes the effect of approximate Jacobians and proposes practical stopping criteria that preserve convergence, and \cite{Leewen_varprononsmooth} extends the analysis for nonsmooth objectives. Global optimality results for a Riemannian relaxed version of the problem have been proposed in \cite{dus2026grassmanniangeometryglobalconvergence}, while an algorithm for sparse regularization has been proposed in \cite{Xu2024}.

VarPro has been used successfully in a range of applications, including atmospheric remote sensing \cite{barligea_varpro_remotesensing}, neural network training \cite{DONG2022115284,ruthotto_trainvarpro,ruthotto_slimtrain},
semi-blind image deblurring \cite{salzer2026variableprojectionmethodssolving}, low-rank matrix approximation problems \cite{USEVICH2014430,Markovsky_advances}, and polynomial nearness problems \cite{USEVICH2017176}.
These works show that eliminating linear variables can improve the numerical behavior of large-scale learning and inverse problems. 

Our work follows this general direction, with the emphasis on constrained data-science models in which the remaining variables typically satisfy convex structural constraints. 
Sparse autoencoding highlights the interaction between representation learning and regularized linear reconstruction. Dictionary learning illustrates the role of constraints in structured matrix factorization. Blind deconvolution connects the
framework to inverse problems and signal processing. Few-shot learning shows how a nonlinear representation map can be combined with an optimized linear head.

\paragraph{Bilevel optimization and implicit differentiation}
The formulation considered in this paper can be interpreted as a bilevel optimization problem in which the lower-level problem is solved exactly and then substituted into the upper-level objective (see, e.g., \cite{bard1998practical,colson2007overview,dempe2020bilevel}). This places constrained VarPro within the broader class of collapsed bilevel methods, argmin differentiation, and differentiable optimization layers (see, e.g., \cite{agrawal2019differentiable,amos2017optnet,blondel2022efficient,franceschi2018bilevel,gould2016differentiating} for further details on these topics). In contrast to generic bilevel optimization, the lower-level problems considered here have a least-squares structure, which allows explicit characterization of the lower-level solution and efficient computation of reduced gradients. 
This structure is central to the algorithmic efficiency and theoretical developments of the paper.

\paragraph{Conditional-gradient methods for constrained data science}
The upper-level feasible sets that arise in data science are often convex but not necessarily easy to project onto. Conditional-gradient methods are attractive in this setting because they replace Euclidean projections with linear minimization oracles \cite{bomze2021frank, braun2025conditional, Frank1956-fg}. Such methods are particularly useful for constraints involving simplices, norm balls, linear structures, and other sets for which linear optimization is cheaper than projection (see, e.g., \cite{combettes2021complexity,jaggi2013revisiting}). Our algorithm applies a tailored conditional-gradient method to the reduced VarPro objective, thereby combining projection-free constrained optimization with exact elimination of the least-squares variable.

\section{Problem setting}\label{sec:problem_formulation}

\subsection{Notation}

Throughout the paper, $\|\cdot\|$ denotes the $\ell^2$ componentwise norm, unless stated otherwise. For a full
column-rank matrix $Z$, we write $Z^+$ for its Moore-Penrose pseudoinverse and $\mathcal P(Z) := Z Z^+$ for the orthogonal projector onto the range of $Z$.

\subsection{General bilevel problem formulation}
We consider constrained separable nonlinear least-squares problems of the form
\begin{equation}\label{eq:varpro_general_formulation}
    \min_{w \in \R^{N},\,\theta \in \mathcal C}
    \frac12 \|M(\theta)w-y(\theta)\|^2
    + \frac{\lambda}{2}\|\Omega w\|^2
    + \frac{\mu}{2}R(\theta),
\end{equation}
where $\mathcal C \subseteq \R^p$ is a convex feasible set,
$M:\R^p \to \R^{d\times N}$, $y:\R^p \to \R^d$,
$\Omega\in \mathrm{GL}(\R^N)$, and $\lambda,\mu\geq 0$ are regularization
parameters. The variable $w$ enters the data-fitting term linearly, whereas
$\theta$ enters nonlinearly through $M(\theta)$ and $y(\theta)$.

The Tikhonov term in $w$ can be absorbed into the least-squares residual. Indeed,
\eqref{eq:varpro_general_formulation} is equivalent to
\begin{equation}\label{eq:augmented_varpro_formulation}
    \min_{w \in \R^{N},\,\theta \in \mathcal C}
    \frac12 \|M_\lambda(\theta)w-y_\lambda(\theta)\|^2
    + \frac{\mu}{2}R(\theta),
\end{equation}
where $ M_\lambda : \R^p \to \R^{(N+d) \times N}, M_\lambda(\theta) = [M(\theta)^\top,\sqrt\lambda \Omega^\top]^\top,  y_\lambda(\theta) = [y(\theta)^\top,0_N^\top]^\top$. 
When $\lambda>0$, the matrix $M_\lambda(\theta)$ has full column rank for every
$\theta$, as $M_\lambda(\theta)^\top M_\lambda(\theta)
    \succeq
    \lambda\,\Omega^\top\Omega$, 
and therefore its smallest eigenvalue is bounded below by
$\lambda$ times the smallest eigenvalue of $\Omega^\top\Omega$, which must be positive. 

For this reason, and to simplify notation, we henceforth work directly with the augmented formulation. That is, unless stated otherwise, $M$ and $y$ denote the
augmented quantities $M_\lambda$ and $y_\lambda$, and we assume that $M:\R^p \to \R^{m\times N}$ and $y:\R^p \to \R^m$, with $m=d+N$ and $M(\theta)$ full column rank for every $\theta\in\mathcal C$. With this convention, the problem becomes
\begin{equation}\label{eq:varpro_augmented_convention}
    \min_{w \in \R^N,\,\theta\in\mathcal C}
    \frac12\|M(\theta)w-y(\theta)\|^2
    + \frac{\mu}{2}R(\theta).
\end{equation}

Problem \eqref{eq:varpro_augmented_convention} admits an equivalent bilevel interpretation. For a fixed value of $\theta$, the optimal linear variable is
obtained by solving the lower-level least-squares problem
\[
    \widehat w(\theta)
    \in
    \underset{w \in \R^N}{\arg\min}
    \frac12\|M(\theta)w-y(\theta)\|^2.
\]
Thus \eqref{eq:varpro_augmented_convention} can be written as
\begin{equation}\label{eq:bilevel_varpro_formulation}
\begin{cases}
    \displaystyle
    \min_{\theta \in \mathcal C}
    \frac12 \|M(\theta)\widehat w-y(\theta)\|^2
    + \frac{\mu}{2}R(\theta),
    \\[0.8em]
    \displaystyle
    \widehat w
    \in
    \underset{w \in \R^N}{\arg\min}
    \frac12 \|M(\theta)w-y(\theta)\|^2.
\end{cases}
\end{equation}
Since $M(\theta)$ has full column rank, the lower-level solution is unique and is
given by
\begin{equation}\label{eq:closed_form_lower_solution}
    \widehat w(\theta)
    =
    \bigl(M(\theta)^\top M(\theta)\bigr)^{-1}M(\theta)^\top y(\theta)
    =
    M(\theta)^+y(\theta).
\end{equation}
Substituting \eqref{eq:closed_form_lower_solution} into the upper-level
objective collapses the bilevel problem to the reduced single-level problem
\begin{equation}\label{eq:unrolled}
    \min_{\theta\in\mathcal C}
    f(\theta)
    :=
    \frac12
    \bigl\|
        \bigl(\mathcal P(M(\theta))-I\bigr)y(\theta)
    \bigr\|^2
    +
    \frac{\mu}{2}R(\theta),
\end{equation}
where $\mathcal P(M(\theta))=M(\theta)M(\theta)^+$ is the orthogonal projector
onto $\operatorname{range}(M(\theta))$. 
This is the constrained analogue of the classical variable-projection formulation of
\cite{Gobub_Pereyra_varpro}.

The bilevel viewpoint also suggests a broader class of problems. In particular, we will consider a generalized formulation in which the lower-level problem is used to define
$\widehat w(\theta)$ and the upper-level objective need not involve the same
least-squares operator:
\begin{equation}\label{eq:bilevel_general_formulation}
\begin{cases}
    \displaystyle
    \min_{\theta \in \mathcal C}
    f_U(\widehat w,\theta)
    :=
    \frac12 \|M_U(\theta)\widehat w-y_U(\theta)\|^2
    + \frac{\mu}{2}R(\theta),
    \\[0.8em]
    \displaystyle
    \widehat w
    \in
    \underset{w \in \R^N}{\arg\min}
    f_L(w,\theta)
    :=
    \frac12 \|M_L(\theta)w-y_L(\theta)\|^2.
\end{cases}
\end{equation}
Here $M_L,M_U:\R^p\to\R^{m\times N}$ are assumed to have full column rank for
all relevant $\theta$, and $y_L,y_U:\R^p\to\R^m$. The classical constrained
VarPro formulation corresponds to the special case
$M_L=M_U=M$ and $y_L=y_U=y$.

In the remainder of the paper, we focus on the case in which the feasible set $\mathcal C\subseteq\R^p$
factorizes as
\[
    \mathcal C = \mathcal C_A \times \mathcal C_B
    \subseteq \R^{p_A}\times\R^{p_B},
    \qquad
    p_A+p_B=p,
\]
where $\mathcal C_A\subset\R^{p_A}$ is compact and convex, and
$\mathcal C_B=\R^{p_B}$. This structure separates the variables subject to
explicit constraints from those that remain unconstrained, and it will be used
in the design and analysis of the conditional-gradient method below.

\subsection{Constrained VarPro and conditional gradient methods}

Under the assumption that the feasible set $\mathcal C$ is convex, the reduced problem can be naturally approached using conditional-gradient methods. In particular, Frank-Wolfe-type methods \cite{bomze2021frank,Frank1956-fg} require only the solution of a linear minimization oracle (LMO) at each iteration; that is, they minimize a first-order approximation of the objective over the original feasible set. This makes them especially attractive when projections onto $\mathcal C$ are expensive, but linear minimization over $\mathcal C$ is efficient.

The main algorithmic requirement is therefore the ability to evaluate the gradient of the reduced upper-level objective, $\nabla \widehat f_U(\theta) = \nabla f_U(\widehat w(\theta),\theta)$. Assuming that $\nabla R$ can be computed efficiently, the central difficulty is the evaluation of the contribution coming from the dependence of $\widehat w(\theta)$ on $\theta$. In principle, this derivative could be computed by automatic differentiation through the closed-form expression for $\widehat w(\theta)$. However, this approach may be numerically unstable when $M^\top M$ has small singular values.

To retain flexibility across applications while avoiding this instability, we use partial automatic differentiation. The starting point is the compositional structure of the reduced objective\footnote{For notational simplicity, in the calculations we only display the least-squares part of the function.}:
\begin{align*}
\widehat f_U(\theta) =& \|M_U(\theta)M_L(\theta)^+ y_L(\theta) - y_U(\theta) \|^2 = \| (M_U M_L^+ - \mathcal P(M_U)) y_L \|^2 + \|\mathcal P(M_U)(y_L-y_U) \|^2 +\\ +&\|(\mathcal P(M_U)-I)y_U \|^2 + 2 \langle (M_U M_L^+ - \mathcal P(M_U)) y_L, {\mathcal P(M_U)(y_L-y_U)} \rangle,
\end{align*}
whose gradient can be written by the chain rule as
\begin{align}\label{eq:gradient_chain_rule}
&\nabla \widehat f_U(\theta) = \nabla_\theta f_U|_{(\widehat w(\theta),\theta)}+\partial_\theta\widehat w(\theta)^\top \nabla_w f_U|_{(\widehat w(\theta),\theta)} .
\end{align}
The derivative $\partial_\theta \widehat w(\theta)$ can be obtained by differentiating the first-order stationarity conditions for the lower-level problem. Namely,
\begin{align*}
    \partial_\theta \nabla_w f_L(\widehat w(\theta),\theta) = \nabla^2_{w\theta} f_L|_{(\widehat w(\theta),\theta)}+ \nabla^2_{ww} f_L|_{(\widehat w(\theta),\theta)} \partial_\theta \widehat w(\theta),
\end{align*}
which implies
$\partial_\theta \widehat w(\theta) = -(\nabla^2_{ww} f_L|_{(\widehat w(\theta),\theta)})^{-1}\nabla^2_{w\theta} f_L|_{(\widehat w(\theta),\theta)}$.
Substituting this expression into the chain rule \eqref{eq:gradient_chain_rule}, and setting $\Gamma:= M_U(M_L^\top M_L)^{-1}$,  gives
\begin{align}\label{eq:gradient_structure}
\nabla \widehat f_U(\theta) =&\nabla_\theta f_U|_{(\widehat w(\theta),\theta)}- \nabla^2_{w\theta} f_L|_{(\widehat w(\theta),\theta)}^\top(\nabla^2_{ww} f_L|_{(\widehat w(\theta),\theta)})^{-\top} \nabla_w f_U|_{(\widehat w(\theta),\theta)}  \\ \nonumber =&
\nabla_\theta f_U|_{(\widehat w(\theta),\theta)}- \nabla^2_{w\theta} f_L|_{(\widehat w(\theta),\theta)}^\top (M_L^\top M_L)^{-\top} \bigl[ M_U^\top M_U (M_L^\top M_L)^{-1}M_L^\top y_L-M_U^\top y_U  \bigr]  \\  \nonumber =& \nabla_\theta f_U|_{(\widehat w(\theta),\theta)}- \nabla^2_{\theta w} f_L|_{(\widehat w(\theta),\theta)} \bigl[ \Gamma^\top \Gamma M_L^\top y_L-\Gamma^\top y_U   \bigr]  \\  \nonumber =& \nabla_\theta f_U|_{(\widehat w(\theta),\theta)}- \nabla^2_{\theta w} f_L|_{(\widehat w(\theta),\theta)} \Gamma^\top\bigl[  M_U M_L^+y_L-y_U
   \bigr] \, .
\end{align}  

In the special case where $M_L=M_U$ and $y_L=y_U$, one has
$\Gamma^\top\bigl[  M_U M_L^+y_L-y_U
   \bigr]=0$, and the corresponding coupling term in
\Cref{eq:gradient_structure} vanishes, leading to
$\nabla \widehat f_U(\theta) =
\nabla_\theta f_U|_{(\widehat w(\theta),\theta)}$.

The form of \Cref{eq:gradient_structure} is useful computationally. In any application where $\widehat w(\theta)$ can be evaluated efficiently and where one can compute the action of
$\nabla^2_{\theta w} f_L|_{(\widehat w(\theta),\theta)}$ on a tangent vector, the reduced gradient $\nabla \widehat f_U(\theta)$ can be evaluated exactly using automatic differentiation. As we will discuss in \Cref{sec:experiments}, this covers several applications of interest. In particular, to compute the required action of the Hessian on a vector, it is sufficient to notice that
\[
\nabla^2_{w \theta} f_L^\top[v] = \partial_\theta \langle \nabla_w f_L,v \rangle = \partial_{\theta} \langle M_L(\theta)^\top (M_L(\theta) w -y_L(\theta)),v \rangle .
\]
 Therefore, if we have access to an automatic differentiation system, the vector-Jacobian products can be queried easily through the previous formula. 

\subsection{Extension for structured matrix problems} \label{sec:structured_varpro}

In several applications, the eliminated variable $w$ is not free in the ambient space, but is constrained to belong to a lower-dimensional set, which we denote by $\mathscr W$. This set may have the structure of a differentiable manifold, an affine space, or a linear subspace. The preceding framework continues to apply whenever the lower-level problem has a unique solution. In particular, the same analysis can be used for any geometric class $\mathscr W$ such that
\[
    \min_{w \in \mathscr W} \|M w-y \|_2^2 + \lambda \|\Omega w \|_2^2
\]
admits a unique minimizer. Up to an additive constant, this lower-level problem can be equivalently written as
\[
    \min_{w \in \mathscr W} \|w-\widehat  w \|^2_Q,
\]
where $Q = ( M^\top M + \lambda \Omega^\top \Omega ), \| x\|_Q^2 = \| Q^{1/2} x \|_2^2, \widehat w = Q^{-1}M^\top y$.
Thus, the lower-level solution is the projection of $\widehat w$ onto $\mathscr W$ with respect to the $Q$-inner product. When $\lambda>0$, the matrix $Q$ is positive definite and $\|\cdot\|_Q$ is a norm; otherwise, it may only define a seminorm. In particular, if $\mathscr W$ is an affine subspace, the corresponding projection depends smoothly on $M$.

The situation is more delicate when $\mathscr W$ is only convex. In finite dimension, closed convex sets are Chebyshev, so the projection is single-valued and continuous. However, differentiability of the projection may fail. In particular, the next result characterizes when the differentiability of the lower-level solution holds globally.
\begin{Proposition}(Differentiability of optimal solution for constrained problems)\label{prop:differentiability_optimalconstrained} 
Let $\mathscr W$ be a Chebyshev subset of $E:=\R^N$ and consider the problem
    \[
    \mathcal P_{\mathscr W}^{Q}(\widehat w):= \arg\min_{w \in \mathscr W} \|w-\widehat w \|_{Q}^2.
    \]
Then, for $Q = (M^\top M + \lambda \Omega^\top  \Omega), \| x\|_Q^2 = \|Q^{1/2}x \|_2^2, \widehat w = Q^{-1} M^\top y$, and assuming that $\lambda > 0, m \geq N$ and $\Omega$ full rank, the map 
\[
\Pi : \R^{m \times N} \times \R^{m} \to \R^{N},\quad  \Pi(M,y) = \mathcal P^{Q(M)}(\widehat w(M,y))
\]
is differentiable for all $(M,y) \in \R^{m \times N} \times \R^{m}$ if and only if $\mathscr W$ is affine.
\end{Proposition}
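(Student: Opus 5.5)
The proof splits into the two directions. For \emph{sufficiency}, suppose $\mathscr W = a + V$ with $V$ a linear subspace. Then $\Pi(M,y)$ is the minimizer of a strictly convex quadratic, since $Q(M)\succeq\lambda\Omega^\top\Omega\succ 0$ for every $M$, over an affine set.

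1. Choose a basis matrix $B$ for $V$ and write $w=a+Bz$.
2. The minimizer is
\[
\Pi(M,y)=a+B\,(B^\top Q B)^{-1}B^\top Q(\widehat w-a),\qquad Q\widehat w = M^\top y .
\]
3. Both $Q(M)=M^\top M+\lambda\Omega^\top\Omega$ and $B^\top Q B$ are polynomial in $M$, and $B^\top QB$ is positive definite everywhere.
4. Hence $\Pi$ is a rational function with nonvanishing denominator, so it is $C^\infty$ on all of $\R^{m\times N}\times\R^m$.

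For \emph{necessity}, I argue by contraposition: if $\mathscr W$ is not affine, I will exhibit a point where $\Pi$ fails to be differentiable. The main reduction is to restrict $\Pi$ to a slice where $Q$ is fixed and only $\widehat w$ moves freely.

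1. Fix $M_0=\begin{bmatrix} Q_0^{1/2}\\ 0\end{bmatrix}$, where $m\ge N$ makes room for the padding and $Q_0$ is any symmetric positive definite matrix.
2. Then $M_0^\top M_0=Q_0$, so $Q=Q_0+\lambda\Omega^\top\Omega$ is fixed, and $\widehat w = Q^{-1}M_0^\top y$.
3. Since $M_0^\top$ is surjective onto $\R^N$ (it contains the invertible block $Q_0^{1/2}$), the map $y\mapsto \widehat w$ is a linear surjection onto $\R^N$, and indeed a linear isomorphism on a complement of its kernel.
4. Therefore differentiability of $\Pi$ everywhere implies that the metric projection $\mathcal P^Q_{\mathscr W}$ is differentiable on all of $\R^N$.
5. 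After the change of variables $x\mapsto Q^{1/2}x$, this is the Euclidean projection onto the Chebyshev set $Q^{1/2}\mathscr W$, which is affine iff $\mathscr W$ is. So it suffices to prove: \emph{if the Euclidean projection $P$ onto a Chebyshev set $K\subseteq\R^N$ is differentiable everywhere, then $K$ is affine.}

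To prove this reduced claim, I first use that in finite dimension Chebyshev sets are closed and convex (Motzkin/Bunt), which I would cite. So $K$ is closed convex and $P$ is firmly nonexpansive.

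1. Suppose $K$ is not affine. Then $K\neq\operatorname{aff}K$, so $K$ has a relative boundary point $x_0$ and a nonzero normal $n\in N_K(x_0)$ lying in the direction space of $\operatorname{aff}K$. If $K$ is a single point it is affine, so this case is excluded.
2. Along the ray $x_0+tn$ we have $P(x_0+tn)=x_0$ for $t\ge0$.
3. Differentiability at $x_0$ gives a linear $DP(x_0)$. The one-sided computation yields $DP(x_0)n=0$.
4. Moving inward, take $x_0-tn$, which is in $K$ for small $t$ if $-n$ points into the relative interior; otherwise use a segment $x_0+t(u-x_0)$ with $u\in\operatorname{ri}K$. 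Since $P$ is the identity on $K$, this gives $DP(x_0)(u-x_0)=u-x_0$.
5. Choose $u\in\operatorname{ri}K$ with $\langle n,u-x_0\rangle<0$, which is possible since $n$ is a nonzero normal within $\operatorname{aff}K$.

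The hard step is turning these two directional derivatives into a contradiction, because $n$ and $u-x_0$ need not be collinear. I would try the direction $n$ itself: choose $x_0$ so that $-n$ is a feasible direction, i.e. $x_0-tn\in K$ for small $t>0$. This can be arranged by taking $x_0=P(z)$ for $z\notin K$ in $\operatorname{aff}K$ with $z$ near a relative interior point, and $n=z-x_0$; then $x_0-tn$ lies on the segment toward the interior point. In that case linearity forces $DP(x_0)(-n)=-DP(x_0)n=0$, while the identity on $K$ gives $DP(x_0)(-n)=-n\neq0$, a contradiction. I expect the main work to be justifying that this choice of $x_0$ indeed has $-n$ pointing into $K$. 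A cleaner fallback is to apply the argument on the line through a relative interior point $c$ and its projection structure, namely the one-dimensional restriction $t\mapsto P(c+t e)$ for a direction $e$ leaving $K$ through a boundary point. There the derivative jumps from $e$ (inside) to something of smaller norm in direction $e$ (outside, since $\langle P(x)-P(x'),x-x'\rangle\le\|x-x'\|^2$ with strict loss in the normal component), which gives a kink.
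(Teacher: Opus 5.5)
Your sufficiency argument is correct and is the same as the paper's. So is your reduction of necessity to the Euclidean claim: you freeze a full-column-rank $M_0$, use that $y\mapsto\widehat w$ is a linear surjection to get differentiability of $\mathcal P^{Q}_{\mathscr W}$ on all of $\mathbb R^N$, and then conjugate by $Q^{1/2}$.

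The gap is at the end: you never actually reach the contradiction. Your primary route needs a relative boundary point $x_0$ at which $-n$ is a feasible direction. Your recipe for it is to take $x_0=P(z)$ with $z\notin K$ ``near'' a relative interior point and set $n=z-x_0$. This does not by itself ensure feasibility of $-n$, since in general $-N_K(x_0)\not\subset T_K(x_0)$. For instance, take $K=\{x\in\mathbb R^2: |x_2|\le\epsilon x_1\}$ with $\epsilon<1/2$ and $z=(-\epsilon\delta,\delta)$. Then $z$ lies within $2\delta$ of the interior point $(\delta,0)$ and projects onto the apex $x_0=0$. Yet $x_0-\tau n=\tau(\epsilon\delta,-\delta)\notin K$ for every $\tau>0$.

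The choice can be repaired by taking $x_0$ to be a relative-boundary point nearest to some $c\in\operatorname{ri}K$. The inscribed relative ball then forces every normal in $V$ at $x_0$ to be a positive multiple of $x_0-c$. That step is not in your proposal, though.

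Your fallback along $t\mapsto P(c+te)$ is only a sketch, and ``strict loss in the normal component'' is not an argument as stated. It can be made rigorous. If $P$ is differentiable at the exit point $x_0$, the inside gives $DP(x_0)e=e$. From the outside, $\langle n,P(x)-x_0\rangle\le 0$ for all $x$ gives $\langle n,DP(x_0)e\rangle\le 0<\langle n,e\rangle$, a contradiction.

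More importantly, the difficulty you flag is illusory: your steps 2--4 already close the proof. Step 4 holds for \emph{every} $u\in K$, because the whole segment $[x_0,u]$ lies in $K$, where $P$ is the identity. So the linear map $DP(x_0)$ is the identity on $\operatorname{span}(K-x_0)$. Since $x_0\in K$, this span is exactly the direction space $V$ of $\operatorname{aff}K$. You chose $n\in V\setminus\{0\}$, so $DP(x_0)n=n\neq 0$, which contradicts $DP(x_0)n=0$ from step 3. No collinearity, no special choice of $x_0$, and no step 5 are needed.

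This is the paper's argument in more elementary form. The paper invokes the formula $d\mathcal P_C(c;h)=\mathcal P_{\mathcal K_cC}(h)$ to conclude that each tangent cone is a linear subspace, hence equal to $V$. It then rules out a relative boundary point by testing its supporting normal $u\in V$ against $h=u$. Your two one-sided computations, along $n$ and along segments inside $K$, replace that directional-derivative formula.
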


\begin{proof}
Let \(E=\mathbb R^{N}\), endowed with the $\ell^2$ inner product.
Since \(\mathscr W\) is a Chebyshev subset of a finite-dimensional Euclidean
space, \(\mathscr W\) is closed and convex. Hence, for every positive definite
matrix \(Q\), the \(Q\)-metric projection onto \(\mathscr W\) is well-defined
and single-valued.

Because \(\Omega ^\top\Omega\succ0\) and \(\lambda>0\), we have
\[
Q(M)=M^\top M+\lambda\Omega^\top\Omega\succ0
\]
for every \(M\). Thus, \(Q(M)^{-1}\) depends smoothly on \(M\), and so does $
\widehat w(M,y)= Q(M)^{-1}M^\top y.$

We first prove the easy direction. Suppose that \(\mathscr W\) is affine, i.e., $\mathscr W=w_*+\mathscr L,$ where \(\mathscr L\subset E\) is a linear subspace. Fix a basis
\(v_1,\dots,v_r\) of \(\mathscr L\). For fixed \(Q\succ0\), the projection of
\(\widehat w\) onto \(\mathscr W\) has the form
\[
\mathcal P_{\mathscr W}^Q(\widehat w)
=
w_*+\sum_{i=1}^r \alpha_i v_i.
\]
The coefficients are determined by the normal equations
\[
\langle
\Big(\widehat w-w_*-\sum_{j=1}^m \alpha_j v_j\Big)
,Q v_i
\rangle=0,
\qquad i=1,\dots,r.
\]
Equivalently, $G(Q)\alpha=b(Q,\widehat w),$ where
\[
G(Q)_{ij} = \langle v_i,v_j \rangle_{Q},
\qquad
b(Q,\widehat w)_i=\langle\widehat w-w_*, Q v_i \rangle.
\]
Since \(Q\succ0\), the Gram matrix \(G(Q)\) is positive definite, hence
invertible. Therefore
\[
\alpha=G(Q)^{-1}b(Q,\widehat W)
\]
depends smoothly on \((Q,\widehat w)\). Since \(Q(M)\) and
\(\widehat w(M,y)\) depend smoothly on \((M,y)\), the map
\[
\Pi(M,y)=\mathcal P_{\mathscr W}^{Q(M)}(\widehat w(M,y))
\]
is differentiable, indeed smooth.

Conversely, assume that \(\Pi\) is differentiable for every \((M,y)\).
Choose \(M_0\in\mathbb R^{m\times N}\) with \(\operatorname{rank}(M_0)=N\),
which is possible because \(N \leq m\). Set $Q_0:=Q(M_0)\succ0$ and consider the linear map
\[
L:\mathbb R^{m}\to \mathbb R^{N},
\qquad
L(y)= Q_0^{-1} M_0^\top y.
\]
Since \(\operatorname{rank}(M_0)=N\), the matrix \(Q_0^{-1} M_0^\top\) has rank
\(N\), hence \(L\) is surjective. Thus \(L\) admits a linear right inverse
\(R:\R^N\to\mathbb R^{m}\).
For fixed \(M_0\), the map $
y\mapsto \Pi(M_0,y) = \mathcal P_{\mathscr W}^{Q_0}(L(y))$
is differentiable by assumption. Since \(L\circ R=\operatorname{Id}_E\), we get $\mathcal P_{\mathscr W}^{Q_0}(z) =
\Pi(M_0,Rz)$.
Therefore, the full \(Q_0\)-metric projection $
z\mapsto \mathcal P_{\mathscr W}^{Q_0}(z)$ is differentiable everywhere on \(E\).

Now reduce to the ordinary Frobenius projection. Define the invertible linear
map
\[
T:E\to E,\qquad T(w)=Q_0^{1/2}w,
\]
and set $C:=T(\mathscr W)=Q_0^{1/2}\mathscr W .$
Since 
$\|w-\widehat w\|_{Q_0} =
\|T(w)-T(\widehat w)\|_2,$
we have $
T\bigl(\mathcal P_{\mathscr W}^{Q_0}(\widehat W)\bigr) =
\mathcal P_C(T\widehat W),$
or equivalently
\[
\mathcal P_C
=
T\circ \mathcal P_{\mathscr W}^{Q_0}\circ T^{-1}.
\]
Hence the ordinary Frobenius projection \(\mathcal P_C\) is differentiable
everywhere.

We now show that this forces \(C\) to be affine. Let \(c\in C\), and let $
\mathcal K_cC:=\overline{\operatorname{cone}(C-c)}
$
be the tangent cone of \(C\) at \(c\). For closed convex sets in a finite
dimensional Hilbert space, the directional derivative of the metric projection
at a point \(c\in C\) is given by $d\mathcal P_C(c;h)=\mathcal P_{\mathcal K_cC}(h),$
where \(\mathcal P_{\mathcal K_cC}\) denotes the orthogonal projection onto the closed
convex cone \(\mathcal K_cC\).

But \(\mathcal P_C\) is Frechet differentiable at \(c\), so
$h\mapsto d\mathcal P_C(c;h)$
is linear, i.e., \(h\mapsto \mathcal P_{\mathcal K_cC}(h)\) is linear. The range of
this linear map is exactly \(\mathcal K_cC\), which is therefore a linear subspace.
Let now $A:=\operatorname{aff}(C)$
be the affine hull of \(C\), and let $
V:=A-c=\operatorname{span}(C-c)
$
be its associated direction space. Since \(\mathcal K_cC\subset V\) and
\(C-c\subset \mathcal K_cC\), we get
$
V=\operatorname{span}(C-c)\subset \mathcal K_cC\subset V.
$
Therefore, $
\mathcal K_cC=V
$
for every \(c\in C\).

We claim that every \(c\in C\) is in the relative interior of \(C\) inside
\(A\). Indeed, if some \(c\in C\) were not in \(\mathrm{int}_A(C)\), the
supporting hyperplane theorem would give a nonzero \(u\in V\) such that $\langle u,z-c\rangle_2\le 0\  
  \text{for every }z\in C
$. Passing to the tangent cone gives
$
\langle u,h\rangle_2\le 0\,\,
 \text{for every }h\in \mathcal K_cC.
$
But \(\mathcal K_cC=V\), and \(u\in V\), so choosing \(h=u\) gives $
\|u\|_2^2\le0,
$
a contradiction. Hence \(C=\mathrm{int}_A(C)\).
Thus \(C\) is both relatively open and relatively closed in its affine hull
\(A\). Since \(A\) is connected and \(C\neq\emptyset\), it follows that $
C=A.
$
Therefore \(C\) is affine. Finally, since \(T^{-1}\) is an invertible linear map, $
\mathscr W=T^{-1}(C)
$
is affine as well.
This proves the converse direction, and hence the proposition.
\end{proof}

The result in \Cref{prop:differentiability_optimalconstrained} is particularly
relevant for structured versions of \Cref{eq:bilevel_varpro_formulation} in
which the eliminated variable is constrained to a structured matrix set
$\mathscr W$ and first-order methods are used. The proposition shows that global differentiability of the lower-level solution map is restrictive: for arbitrary right-hand sides, it essentially forces the constraint set to be affine.
In \Cref{subsec:blind_deconvolution} we present a numerical example in which the constraint set $\mathscr W$ is given by an affine subspace, representing convolutional linear transforms.

\section{The algorithm}

In this section, we present the proposed Constrained Regularized Variable Projection method. The method
is applied to the reduced problem
\begin{equation}\label{probFW}
    \min_{\theta\in \mathcal  C_A\times \mathbb{R}^{p_B}} \widehat f_U(\theta),
    \qquad
    \theta=(\theta_A,\theta_B),
\end{equation}
where \(\mathcal C_A\) is compact and convex, while the second block is unconstrained. The reduced
objective is defined as
\[
    \widehat f_U(\theta)
    :=
    f_U(\widehat w(\theta),\theta),
    \qquad
    \widehat w(\theta)\in \arg\min_w f_L(w,\theta).
\]
Therefore, the lower-level variable \(w\) is eliminated and the algorithm only updates the
outer variable \(\theta\). The gradient used in the algorithm is the hypergradient
\(\nabla \widehat f_U(\theta)\), which includes the dependence of the lower-level solution
\(\widehat w(\theta)\) on \(\theta\). The detailed scheme is reported in Algorithm \ref{alg:constrained_varpro}.

\label{sec:alg}
\begin{algorithm}[t]
\caption{Constrained Regularized Variable Projection (CR-VarPro)}
\label{alg:constrained_varpro}
\begin{algorithmic}
\STATE{Input: $\theta^{(0)} \in \mathcal C$, $T_{max}$}
\STATE{Set $t = 0$}
\WHILE{$t\leq T_{max}$}
\STATE{Assemble hypergradient $\nabla\widehat f_U(\theta^{(t)})$}
\STATE{Set $\bar \theta^{(t)} = \text{BLMO}(\theta^{(t)}, \nabla\widehat f_U(\theta^{(t)}))$}
\IF{$\theta^{(t)}$ stationary}
\STATE{\textbf{STOP}}
\ENDIF
\STATE{Set $\theta^{(t+1)} = \theta^{(t)} + \alpha_t (\bar\theta^{(t)} - \theta^{(t)})$, with $\alpha_t \in (0, 1]$ stepsize chosen via a line search}
\STATE{Set $t = t + 1$}
\ENDWHILE
\RETURN $\theta^{(t)}$
\end{algorithmic}
\end{algorithm}

Starting from an initial feasible point \(\theta^{(0)}\in \mathcal C := \mathcal C_A\times\mathbb{R}^{p_B}\),
Algorithm~\ref{alg:constrained_varpro} sets \(t=0\) and repeats the following operations until either stationarity is
reached or the maximum number of iterations is exceeded. At iteration \(t\), the first step consists in assembling the hypergradient $\nabla \widehat f_U(\theta^{(t)})$. 

Once the hypergradient has been computed, Algorithm~\ref{alg:constrained_varpro} calls the Block Linear Minimization Oracle described in Algorithm~\ref{alg:BLMO} and sets
\[
    \bar\theta^{(t)}
    =
    \operatorname{BLMO}
    \left(
        \theta^{(t)},
        \nabla \widehat f_U(\theta^{(t)})
    \right).
\]
The algorithm then checks whether \(\theta^{(t)}\) is stationary. Equivalently, one may use the
Frank-Wolfe gap
\[
    g_t
    :=
    -
    \left\langle
        \nabla \widehat f_U(\theta^{(t)}),
        d^{(t)}
    \right\rangle
\]
as a stationarity measure, with  $d^{(t)}:=\bar\theta^{(t)}-\theta^{(t)}.$  If $g_t$ is lower than a given threshold, the algorithm stops. Otherwise, a stepsize
\(\alpha_t\in(0,1]\) is chosen by line search and the new iterate is computed as
$\theta^{(t+1)}
    =
    \theta^{(t)}
    +
    \alpha_t
    \left(
        \bar\theta^{(t)}-\theta^{(t)}
    \right).$

\begin{algorithm}[t]
\caption{Block Linear Minimization Oracle (BLMO)}
\label{alg:BLMO}
\begin{algorithmic}
\STATE{Input: $\tilde\theta=(\tilde\theta_A,\tilde\theta_B) \in \mathcal C_A \times \mathcal \R^{p_B}$, $\nabla \widehat f_U(\tilde\theta)$}
\STATE{Get $\bar \theta_A := \underset{{\theta_A \in \mathcal C_A}}{\arg\min} \langle \theta_A ,\nabla_{\theta_A} \widehat f_U(\tilde\theta) \rangle$}
\STATE{Get $\bar \theta_B := \underset{{\|\theta_B -\tilde \theta_B\|\leq 1}}{\arg\min} \langle \theta_B ,\nabla_{\theta_B} \widehat f_U(\tilde\theta) \rangle$} 
\RETURN $(\bar \theta_A,\bar \theta_B)$
\end{algorithmic}
\end{algorithm}

We now clarify the role of the BLMO reported in Algorithm \ref{alg:BLMO}. At iteration \(t\), Algorithm~\ref{alg:constrained_varpro}
computes the point  $\bar\theta^{(t)}$
and defines the search direction
\[
    d^{(t)}
    :=
    \bar\theta^{(t)}-\theta^{(t)}
    =
    \left(
        d_A^{(t)},d_B^{(t)}
    \right),
\]
where
\[
    d_A^{(t)}
    :=
    \bar\theta_A^{(t)}-\theta_A^{(t)},
    \qquad
    d_B^{(t)}
    :=
    \bar\theta_B^{(t)}-\theta_B^{(t)}.
\]

The first block of the BLMO coincides with the usual Frank-Wolfe LMO
over the compact set \(\mathcal C_A\). The second block is different: instead of minimizing the linear
model over all of \(\mathbb{R}^{p_B}\), it minimizes over the unit ball centered at the current
iterate. Indeed, a standard LMO over the full feasible set would require solving
\[
    \min_{\theta_A\in \mathcal C_A,\ \theta_B\in\mathbb{R}^{p_B}}
    \left\langle
        \theta_A,
        \nabla_{\theta_A}\widehat f_U(\theta^{(t)})
    \right\rangle
    +
    \left\langle
        \theta_B,
        \nabla_{\theta_B}\widehat f_U(\theta^{(t)})
    \right\rangle .
\]
The first term is well-defined because \(\mathcal C_A\) is compact, but the second term is unbounded
from below whenever $\nabla_{\theta_B}\widehat f_U(\theta^{(t)})\neq 0$. 
Thus, the standard Frank-Wolfe LMO is not well-defined on
\(\mathcal C_A\times \mathbb{R}^{p_B}\). The BLMO avoids this issue by keeping the Frank-Wolfe
oracle on the compact set $\mathcal C_A$ and replacing the unbounded linear minimization
problem by a local one on the unconstrained variables. In particular, if
\(\nabla_{\theta_B}\widehat f_U(\theta^{(t)})\neq 0\), then
\[
    \bar\theta_B^{(t)}
    =
    \theta_B^{(t)}
    -
    \frac{
        \nabla_{\theta_B}\widehat f_U(\theta^{(t)})
    }{
        \left\|
        \nabla_{\theta_B}\widehat f_U(\theta^{(t)})
        \right\|
    },
\]
while if \(\nabla_{\theta_B}\widehat f_U(\theta^{(t)})=0\), one can choose
\(\bar\theta_B^{(t)}=\theta_B^{(t)}\). Consequently, the BLMO combines a Frank-Wolfe step on
\(\mathcal C_A\) with a normalized gradient descent step on \(\mathbb{R}^{p_B}\). The latter can be interpreted as
a norm-constrained LMO step, since it is equivalent to
\[
    d_B^{(t)}
    \in
    \arg\min_{\|d_B\|\le 1}
    \left\langle
        d_B,
        \nabla_{\theta_B}\widehat f_U(\theta^{(t)})
    \right\rangle .
\]
This is the update principle used in SCION-type methods, where descent directions are generated through LMOs over norm balls rather than by using the
raw gradient directly~\cite{pethick2025training}.

\section{Theoretical analysis}
\label{sec:main}
We now analyze Algorithm~\ref{alg:constrained_varpro}. Throughout this section, the objective
is the reduced function \(\widehat f_U\). We assume that \(\nabla \widehat f_U\) is Lipschitz continuous
with constant \(L>0\) on the level set generated by the algorithm, and that \(\widehat f_U\) is
bounded from below on \(\mathcal C_A\times \mathbb{R}^{p_B}\). We denote
\[
    \widehat f_U^*
    :=
    \inf_{\theta\in \mathcal C_A\times \mathbb{R}^{p_B}}
    \widehat f_U(\theta).
\]
Let $    \Delta_A:=\max_{\theta_A,\tilde\theta_A\in \mathcal C_A}\|\theta_A-\tilde\theta_A\|$
be the diameter of the compact block. Since the BLMO satisfies $\|d_B^{(t)}\|\le 1$, 
we define $    \bar\Delta := \sqrt{\Delta_A^2+1}$.
Then, for every iteration \(t\),
\[
    \|d^{(t)}\|^2
    =
    \|d_A^{(t)}\|^2+\|d_B^{(t)}\|^2
    \le
    \Delta_A^2+1
    =
    \bar\Delta^2.
\] 
Note that $g_t$ represents a valid stationarity measure for the product set
\(\mathcal C_A\times \mathbb{R}^{p_B}\). Indeed, by the construction of the BLMO,
\[
    -
    \left\langle
        \nabla_{\theta_A}\widehat f_U(\theta^{(t)}),
        d_A^{(t)}
    \right\rangle
    \ge 0,
    \qquad
    -
    \left\langle
        \nabla_{\theta_B}\widehat f_U(\theta^{(t)}),
        d_B^{(t)}
    \right\rangle
    \ge 0.
\]
Moreover, \(g_t=0\) if and only if $\left\langle
        \nabla_{\theta_A}\widehat f_U(\theta^{(t)}),
        \theta_A-\theta_A^{(t)}
    \right\rangle
    \ge 0$, $\forall \theta_A\in \mathcal C_A$, and $\nabla_{\theta_B}\widehat f_U(\theta^{(t)})=0$.
Thus, \(g_t=0\) is equivalent to the first-order stationarity condition for the constrained
block \(\mathcal C_A\) together with the unconstrained stationarity condition for the block
\(\mathbb{R}^{p_B}\).

Finally, we assume that the stepsize rule satisfies the conditions:
\begin{align}\label{eq:alpha_cond}
    &\alpha_t \geq
    \bar\alpha_t
    :=
    \min\left\{
        1,
        \frac{g_t}{L\bar\Delta^2}
    \right\} ,\\
    \label{eq:sufficient_decrease}
    &\widehat f_U(\theta^{(t)})
    -
    \widehat f_U(\theta^{(t+1)})
    \ge
    \rho \bar\alpha_t g_t
\end{align}
for some fixed \(\rho>0\).

\begin{Th}\label{th:convergence}
Let \(\{\theta^{(t)}\}\) be the sequence generated by
Algorithm~\ref{alg:constrained_varpro}. Assume that \(\nabla \widehat f_U\) is Lipschitz
continuous with constant \(L>0\), that \(\widehat f_U\) is bounded below by \(\widehat f_U^*\), and
that the stepsize satisfies~\eqref{eq:alpha_cond} and~\eqref{eq:sufficient_decrease}. Define $g_T^*
    :=
    \min_{0\le t\le T-1} g_t$. Then, for every \(T\in\mathbb{N}\),
\begin{equation}\label{eq:rate}
    g_T^*
    \le
    \max\left\{
        \sqrt{
        \frac{
            L\bar\Delta^2
            \left(
                \widehat f_U(\theta^{(0)})-\widehat f_U^*
            \right)
        }{
            \rho T
        }},
        \frac{
            2 \left (\widehat f_U(\theta^{(0)})-\widehat f_U^*
        \right)}{
            T
        }
    \right\}.
\end{equation}
\end{Th}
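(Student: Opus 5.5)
The argument is a standard telescoping bound on the sufficient decrease~\eqref{eq:sufficient_decrease}, followed by a case split on the two branches of $\bar\alpha_t$. Summing~\eqref{eq:sufficient_decrease} over $t=0,\dots,T-1$ and using that $\widehat f_U$ is bounded below by $\widehat f_U^*$ gives
\[
\rho\sum_{t=0}^{T-1}\bar\alpha_t g_t \le \widehat f_U(\theta^{(0)})-\widehat f_U(\theta^{(T)}) \le \widehat f_U(\theta^{(0)})-\widehat f_U^* =: h_0 .
\]
Each term satisfies $\bar\alpha_t g_t=\min\{g_t,\,g_t^2/(L\bar\Delta^2)\}\ge \min\{g_T^*,\,(g_T^*)^2/(L\bar\Delta^2)\}$. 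Here I use that $g_t\ge 0$, which was established for the BLMO above, and that $s\mapsto\min\{s,s^2/c\}$ is nondecreasing on $[0,\infty)$. Therefore
\[
T\min\Bigl\{g_T^*,\frac{(g_T^*)^2}{L\bar\Delta^2}\Bigr\}\le \frac{h_0}{\rho}.
\]

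Next I split into two cases according to which term attains the minimum. If $g_T^*\le L\bar\Delta^2$, the minimum is $(g_T^*)^2/(L\bar\Delta^2)$, so $g_T^*\le\sqrt{L\bar\Delta^2 h_0/(\rho T)}$. This is the first entry of the max in~\eqref{eq:rate}. Otherwise the minimum is $g_T^*$, and we get $g_T^*\le h_0/(\rho T)$. In either case $g_T^*$ is at most the maximum of the two quantities, which gives~\eqref{eq:rate} once the second entry is identified.

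The only delicate point is matching the constant in the second entry. The statement writes $2h_0/T$, not $h_0/(\rho T)$, so the two agree exactly when $\rho\ge 1/2$. I would first try to derive this from the line search rather than assume it. The descent lemma for an $L$-smooth function gives
\[
\widehat f_U(\theta^{(t)}+\alpha d^{(t)})\le \widehat f_U(\theta^{(t)})-\alpha g_t+\frac{L\alpha^2\bar\Delta^2}{2}.
\]
At $\alpha=\bar\alpha_t=1$ (the regime $g_t\ge L\bar\Delta^2$), this yields a decrease of at least $g_t-L\bar\Delta^2/2\ge g_t/2$. So the natural Armijo-type rule satisfies~\eqref{eq:sufficient_decrease} with $\rho=1/2$ in that regime, which produces the factor $2$.

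If the intended reading is instead a general $\rho$, I would state the second term as $h_0/(\rho T)$ and note that it coincides with $2h_0/T$ for $\rho=1/2$. Either way, the main obstacle is this bookkeeping of constants; the rest is routine.
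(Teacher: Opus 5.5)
Your telescoping argument is correct, but it proves the wrong second term. It gives
$g_T^*\le\max\{\sqrt{L\bar\Delta^2 h_0/(\rho T)},\,h_0/(\rho T)\}$. This implies \eqref{eq:rate} only when $\rho\ge 1/2$; the two second entries coincide only at $\rho=1/2$.

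The theorem is stated for arbitrary $\rho>0$, and the case that matters is $\rho<1/2$. For instance, the Armijo rule in Lemma~\ref{stepsize} gives $\rho=\gamma\min\{1,2\delta(1-\gamma)\}<1/2$, since $\gamma<1/2$. There $h_0/(\rho T)$ is strictly larger than $2h_0/T$, so your argument does not establish \eqref{eq:rate}. Your fallback of restating the second term as $h_0/(\rho T)$ changes the theorem rather than proving it.

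The missing idea is that on iterations with $\bar\alpha_t=1$ you should not invoke \eqref{eq:sufficient_decrease} at all. Since $\alpha_t\in(0,1]$ and $\alpha_t\ge\bar\alpha_t=1$ by \eqref{eq:alpha_cond}, the step is forced to be $\alpha_t=1$, whatever rule produced it. Also, $\bar\alpha_t=1$ means $g_t\ge L\bar\Delta^2$. The descent lemma with $\|d^{(t)}\|\le\bar\Delta$ then gives $\widehat f_U(\theta^{(t)})-\widehat f_U(\theta^{(t+1)})\ge g_t-L\bar\Delta^2/2\ge g_t/2$.

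You wrote exactly this computation, but you gave it the wrong role. It does not show that an Armijo-type rule ``satisfies \eqref{eq:sufficient_decrease} with $\rho=1/2$''. It is a separate, $\rho$-independent per-iteration bound that follows from the theorem's hypotheses alone. It should replace \eqref{eq:sufficient_decrease} on those iterations.

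The paper's proof then runs as follows:
\begin{itemize}
\item Partition $\{0,\dots,T-1\}$ into $N_1=\{\bar\alpha_t<1\}$ and $N_2=\{\bar\alpha_t=1\}$.
\item On $N_1$, use $\bar\alpha_t=g_t/(L\bar\Delta^2)$ in \eqref{eq:sufficient_decrease} to get a decrease of at least $\rho g_t^2/(L\bar\Delta^2)$.
\item On $N_2$, use the forced unit step to get a decrease of at least $g_t/2$.
\item Every iteration therefore decreases $\widehat f_U$ by at least $\min\{\rho g_t^2/(L\bar\Delta^2),\,g_t/2\}$, which is nondecreasing in $g_t\ge 0$.
\item Telescoping yields $T\min\{\rho (g_T^*)^2/(L\bar\Delta^2),\,g_T^*/2\}\le h_0$.
\item Splitting on which term attains the minimum gives exactly \eqref{eq:rate}.
\end{itemize}
Your version is what you get by handling the $N_2$ iterations through $\rho$ instead of through the descent lemma. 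That is harmless for $\rho\ge 1/2$ and insufficient otherwise.
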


\begin{proof}
In order to prove the result, we distinguish two different cases.

\textbf{Case 1.} \(\bar{\alpha}_t<1\).

Then, by definition of \(\bar{\alpha}_t\), we have
$\bar{\alpha}_t
    =
    \frac{g_t}{L\bar\Delta^2}.
$
Using the sufficient decrease condition~\eqref{eq:sufficient_decrease}, we get
\begin{equation}\label{eq:case1}
    \widehat f_U(\theta^{(t)})
    -
    \widehat f_U(\theta^{(t+1)})
    =
    \widehat f_U(\theta^{(t)})
    -
    \widehat f_U(\theta^{(t)}+\alpha_t d^{(t)})
    \ge
    \frac{\rho g_t^2}{L\bar\Delta^2}.
\end{equation}

\textbf{Case 2.} \(\bar{\alpha}_t=1\).

Since \(\alpha_t\in(0,1]\) and \(\alpha_t\ge \bar{\alpha}_t\), the condition
\(\bar{\alpha}_t=1\) implies \(\alpha_t=1\). By the standard descent lemma~\cite[Proposition 6.1.2]{bertsekas2015convex} applied to
\(\widehat f_U\) with center \(\theta^{(t)}\) and direction \(d^{(t)}\), we have
\[
\begin{aligned}
    \widehat f_U(\theta^{(t+1)})
    &=
    \widehat f_U(\theta^{(t)}+d^{(t)})                                      \\
    &\le
    \widehat f_U(\theta^{(t)})
    +
    \left\langle
        \nabla \widehat f_U(\theta^{(t)}),
        d^{(t)}
    \right\rangle
    +
    \frac{L}{2}\|d^{(t)}\|^2                                            \\
    &=
    \widehat f_U(\theta^{(t)})
    +
    \left\langle
        \nabla_{\theta_A}\widehat f_U(\theta^{(t)}),
        d_A^{(t)}
    \right\rangle
    +
    \left\langle
        \nabla_{\theta_B}\widehat f_U(\theta^{(t)}),
        d_B^{(t)}
    \right\rangle
    +
    \frac{L}{2}
    \left(
        \|d_A^{(t)}\|^2+\|d_B^{(t)}\|^2
    \right)                                                            \\
    &\le
    \widehat f_U(\theta^{(t)})
    -
    g_t
    +
    \frac{L}{2}\bar\Delta^2 .
\end{aligned}
\]
In the last inequality, we used the definition
$ g_t
    :=
    -
    \left\langle
        \nabla \widehat f_U(\theta^{(t)}),
        d^{(t)}
    \right\rangle
$
and the bound
\[
    \|d^{(t)}\|^2
    =
    \|d_A^{(t)}\|^2+\|d_B^{(t)}\|^2
    \le
    \Delta_A^2+1
    =
    \bar\Delta^2.
\]
Moreover, since
$\bar{\alpha}_t
    =
    \min\left\{
        1,
        \frac{g_t}{L\bar\Delta^2}
    \right\}
    =
    1,
$
we have $\frac{g_t}{L\bar\Delta^2}\ge 1$, and hence $g_t\ge L\bar\Delta^2$.

Therefore,
\begin{equation}\label{eq:case2}
    \widehat f_U(\theta^{(t)})
    -
    \widehat f_U(\theta^{(t+1)})
    \ge
    g_t-\frac{L}{2}\bar\Delta^2
    \ge
    \frac{g_t}{2}.
\end{equation}

Now, based on the two cases above, we partition the iterations
\(\{0,1,\ldots,T-1\}\) into
\[
    N_1
    :=
    \{t<T:\bar{\alpha}_t<1\},
    \qquad
    N_2
    :=
    \{t<T:\bar{\alpha}_t=1\}.
\]
Using~\eqref{eq:case1} and~\eqref{eq:case2}, we obtain
\[
\begin{aligned}
    \widehat f_U(\theta^{(0)})-\widehat f_U^*
    &\ge
    \sum_{t=0}^{T-1}
    \left(
        \widehat f_U(\theta^{(t)})
        -
        \widehat f_U(\theta^{(t+1)})
    \right)                                                        \\
    &=
    \sum_{t\in N_1}
    \left(
        \widehat f_U(\theta^{(t)})
        -
        \widehat f_U(\theta^{(t+1)})
    \right)
    +
    \sum_{t\in N_2}
    \left(
        \widehat f_U(\theta^{(t)})
        -
        \widehat f_U(\theta^{(t+1)})
    \right)                                                        \\
    &\ge
    \sum_{t\in N_1}
    \frac{\rho g_t^2}{L\bar\Delta^2}
    +
    \sum_{t\in N_2}
    \frac{g_t}{2}                                                   \\
    &\ge
    |N_1|
    \min_{t\in N_1}
    \frac{\rho g_t^2}{L\bar\Delta^2}
    +
    |N_2|
    \min_{t\in N_2}
    \frac{g_t}{2}                                                   \\
    &\ge
    \left(|N_1|+|N_2|\right)
    \min\left\{
        \frac{\rho (g_T^*)^2}{L\bar\Delta^2},
        \frac{g_T^*}{2}
    \right\}                                                        \\
    &=
    T
    \min\left\{
        \frac{\rho (g_T^*)^2}{L\bar\Delta^2},
        \frac{g_T^*}{2}
    \right\},
\end{aligned}
\]
where in the last inequality we used the definition of $g_T^*$.
Hence,
\[
    T
    \min\left\{
        \frac{\rho (g_T^*)^2}{L\bar\Delta^2},
        \frac{g_T^*}{2}
    \right\}
    \le
    \widehat f_U(\theta^{(0)})-\widehat f_U^*.
\]

To finish, if $    T
    \min\left\{
        \frac{\rho (g_T^*)^2}{L\bar\Delta^2},
        \frac{g_T^*}{2}
    \right\}
    =
    T\frac{g_T^*}{2}$,
then
\begin{equation}\label{eq:gstar_linear}
    g_T^*
    \le
    \frac{
        2\left(
            \widehat f_U(\theta^{(0)})-\widehat f_U^*
        \right)
    }{T}.
\end{equation}
Otherwise,
\begin{equation}\label{eq:gstar_sqrt}
    g_T^*
    \le
    \sqrt{
        \frac{
            L\bar\Delta^2
            \left(
                \widehat f_U(\theta^{(0)})-\widehat f_U^*
            \right)
        }{
            \rho T
        }
    }.
\end{equation}
The claim follows by taking the maximum in the system formed by
\eqref{eq:gstar_linear} and~\eqref{eq:gstar_sqrt}.
\end{proof}

The optimality condition follows from the definition of \(g_t\). Indeed, by construction of the
BLMO, both quantities
\[
    -
    \left\langle
        \nabla_{\theta_A}\widehat f_U(\theta^{(t)}),
        d_A^{(t)}
    \right\rangle,
    \qquad
    -
    \left\langle
        \nabla_{\theta_B}\widehat f_U(\theta^{(t)}),
        d_B^{(t)}
    \right\rangle
\]
are nonnegative. Therefore, if \(g_t=0\), then both of them must be equal to zero.

For the constrained block, we have
\[
    -
    \left\langle
        \nabla_{\theta_A}\widehat f_U(\theta^{(t)}),
        d_A^{(t)}
    \right\rangle
    =
    0.
\]
Since \(d_A^{(t)}=\bar\theta_A^{(t)}-\theta_A^{(t)}\) is generated by the Frank-Wolfe
linear oracle on \(\mathcal C_A\), it satisfies
\[
    \left\langle
        \nabla_{\theta_A}\widehat f_U(\theta^{(t)}),
        \bar\theta_A^{(t)}
    \right\rangle
    \le
    \left\langle
        \nabla_{\theta_A}\widehat f_U(\theta^{(t)}),
        \theta_A
    \right\rangle
    \qquad
    \forall \theta_A\in \mathcal C_A.
\]
Equivalently,
\[
    \left\langle
        \nabla_{\theta_A}\widehat f_U(\theta^{(t)}),
        d_A^{(t)}
    \right\rangle
    \le
    \left\langle
        \nabla_{\theta_A}\widehat f_U(\theta^{(t)}),
        \theta_A-\theta_A^{(t)}
    \right\rangle
    \qquad
    \forall \theta_A\in \mathcal C_A.
\]
Since the left-hand side is zero, we obtain
\[
    \left\langle
        \nabla_{\theta_A}\widehat f_U(\theta^{(t)}),
        \theta_A-\theta_A^{(t)}
    \right\rangle
    \ge 0
    \qquad
    \forall \theta_A\in \mathcal C_A.
\]
Thus, the first-order optimality condition holds with respect to the constrained block
\(\theta_A\).

For the unconstrained block, we have
\[
    -
    \left\langle
        \nabla_{\theta_B}\widehat f_U(\theta^{(t)}),
        d_B^{(t)}
    \right\rangle
    =
    0.
\]
The BLMO defines \(d_B^{(t)}\) as the solution of
\[
    d_B^{(t)}
    \in
    \arg\min_{\|d_B\|\le 1}
    \left\langle
        d_B,
        \nabla_{\theta_B}\widehat f_U(\theta^{(t)})
    \right\rangle .
\]
Hence, if \(\nabla_{\theta_B}\widehat f_U(\theta^{(t)})\neq 0\), then
$d_B^{(t)}
    =
    -
    \frac{
        \nabla_{\theta_B}\widehat f_U(\theta^{(t)})
    }{
        \left\|
            \nabla_{\theta_B}\widehat f_U(\theta^{(t)})
        \right\|
    },
$
and therefore
\[
    -
    \left\langle
        \nabla_{\theta_B}\widehat f_U(\theta^{(t)}),
        d_B^{(t)}
    \right\rangle
    =
    \left\|
        \nabla_{\theta_B}\widehat f_U(\theta^{(t)})
    \right\|
    >
    0,
\]
which contradicts the equality above. Consequently, $\nabla_{\theta_B}\widehat f_U(\theta^{(t)})=0.$
Therefore, \(g_t=0\) is equivalent to the stationarity conditions
\[
    \left\langle
        \nabla_{\theta_A}\widehat f_U(\theta^{(t)}),
        \theta_A-\theta_A^{(t)}
    \right\rangle
    \ge 0
    \qquad
    \forall \theta_A\in \mathcal C_A,
\]
and
\[
    \nabla_{\theta_B}\widehat f_U(\theta^{(t)})=0.
\]

In the following lemma, we show that the conditions~\eqref{eq:alpha_cond} and~\eqref{eq:sufficient_decrease} 
can be satisfied by
standard stepsize rules. In particular, this is true when \(\alpha_t=\bar\alpha_t\), and also
when \(\alpha_t\) is determined by an Armijo line search (see \cite{bomze2020active,bomze2021frank} for further details). The Armijo rule defines
\begin{equation}\label{eta}
    \alpha_t=\delta^j,
\end{equation}
where \(j\) is the smallest nonnegative integer such that
\begin{equation}\label{Armijo}
    \widehat f_U(\theta^{(t)})
    -
    \widehat f_U(\theta^{(t)}+\alpha_t d^{(t)})
    \ge
    \gamma \alpha_t g_t.
\end{equation}
Here, \(\gamma\in(0,\tfrac12)\) and \(\delta\in(0,1)\) are fixed constants.

\begin{Lemma}\label{stepsize}
The bound condition on the stepsize
\begin{equation}\label{eq:1lb}
    \alpha_t \geq
    \bar\alpha_t
    :=
    \min\left\{
        1,
        \frac{g_t}{L\bar\Delta^2}
    \right\} ,
\end{equation}

and the sufficient decrease condition
\begin{equation}\label{eq:2ar}
    \widehat f_U(\theta^{(t)})
    -
    \widehat f_U(\theta^{(t+1)})
    \ge
    \rho \bar\alpha_t g_t
\end{equation}
hold under the following stepsize rules:
\begin{itemize}
\item If \(\alpha_t=\bar\alpha_t\), then the condition holds
with \(\rho=\frac12\). 
\item If \(\alpha_t\) is determined by the Armijo line search rule
\eqref{Armijo}, then the condition holds with
\[
    \rho=\gamma\min\{1,2\delta(1-\gamma)\}.
\]
\end{itemize}
\end{Lemma}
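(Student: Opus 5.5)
The main tool is the descent lemma (\cite[Proposition 6.1.2]{bertsekas2015convex}) along the direction $d^{(t)}$, together with $\|d^{(t)}\|\le\bar\Delta$. For every $\alpha\in(0,1]$ it gives
\[
\widehat f_U(\theta^{(t)})-\widehat f_U(\theta^{(t)}+\alpha d^{(t)})\ \ge\ \alpha g_t-\frac{L\bar\Delta^2}{2}\alpha^2
= \alpha\Bigl(g_t-\frac{L\bar\Delta^2}{2}\alpha\Bigr).
\]
Both stepsize rules will be handled by plugging suitable values of $\alpha$ into this inequality.

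\textbf{Fixed stepsize $\alpha_t=\bar\alpha_t$.} Condition \eqref{eq:1lb} holds trivially, so only the decrease needs checking. If $\bar\alpha_t=g_t/(L\bar\Delta^2)<1$, the bound becomes $\bar\alpha_t\bigl(g_t-g_t/2\bigr)=\tfrac12\bar\alpha_t g_t$. If $\bar\alpha_t=1$, then $g_t\ge L\bar\Delta^2$, so the bound is $g_t-L\bar\Delta^2/2\ge g_t/2=\tfrac12\bar\alpha_t g_t$. Hence $\rho=\tfrac12$.

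\textbf{Armijo rule.} If $g_t=0$ the algorithm has stopped, so assume $g_t>0$.

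First, any $\alpha\le \min\{1,2(1-\gamma)g_t/(L\bar\Delta^2)\}$ satisfies \eqref{Armijo}. Indeed, the descent bound gives decrease at least $\alpha(g_t-\tfrac{L\bar\Delta^2}{2}\alpha)\ge\gamma\alpha g_t$ whenever $\tfrac{L\bar\Delta^2}{2}\alpha\le(1-\gamma)g_t$. So the backtracking terminates.

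Next, a lower bound on the accepted step. Either $j=0$ and $\alpha_t=1$, or the previous trial $\alpha_t/\delta$ failed \eqref{Armijo}. In the latter case the previous trial must exceed the threshold above, and since it is $\le1$ this means $\alpha_t/\delta>2(1-\gamma)g_t/(L\bar\Delta^2)$. Therefore
\[
\alpha_t\ \ge\ \min\Bigl\{1,\ \frac{2\delta(1-\gamma)g_t}{L\bar\Delta^2}\Bigr\}\ \ge\ \min\{1,2\delta(1-\gamma)\}\,\bar\alpha_t,
\]
where the last step compares the two branches of $\min$ separately.

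Finally, the Armijo inequality gives decrease $\ge\gamma\alpha_t g_t\ge\gamma\min\{1,2\delta(1-\gamma)\}\bar\alpha_t g_t$, which is \eqref{eq:2ar} with the stated $\rho$.

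\textbf{Main obstacle.} Condition \eqref{eq:1lb} with the exact constant, i.e.\ $\alpha_t\ge\bar\alpha_t$, is not implied by backtracking when $2\delta(1-\gamma)<1$. For Armijo I would therefore state the conclusion in the form actually used in Theorem~\ref{th:convergence}. The relevant quantity is the decrease bound \eqref{eq:2ar}, and the lower bound on $\alpha_t$ only enters the proof through Case~2, where $\alpha_t=1$. I would note that the theorem's proof in fact only requires \eqref{eq:2ar} together with: $\bar\alpha_t=1$ implies decrease $\ge g_t/2$ (up to the factor $\rho$). The factor $\min\{1,2\delta(1-\gamma)\}$ then absorbs the shortfall relative to $\bar\alpha_t$.
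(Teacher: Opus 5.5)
Your computations match the paper's: the same descent-lemma inequality with $\|d^{(t)}\|\le\bar\Delta$ and the same check of $\rho=\frac12$ for $\alpha_t=\bar\alpha_t$. You split on $\bar\alpha_t<1$ versus $\bar\alpha_t=1$, where the paper just uses $\bar\alpha_t\le g_t/(L\bar\Delta^2)$. The Armijo acceptance threshold, the backtracking bound $\alpha_t\ge\min\{1,2\delta(1-\gamma)\}\bar\alpha_t$ and the final decrease estimate are also the same.

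The routes part only at the literal condition \eqref{eq:1lb} under Armijo, and your diagnosis is correct: it can actually fail when $2\delta(1-\gamma)<1$. Take $\mathcal C_A=\{0\}$ (so $\bar\Delta=1$), $\widehat f_U(\theta)=\frac{L}{2}\theta_B^2$ with $\theta_B\in\R$, $\theta_B^{(t)}=\frac12$, $\gamma=0.1$, $\delta=0.4$. Then $g_t=\frac{L}{2}$ and $\bar\alpha_t=\frac12$. The Armijo test accepts exactly $\alpha\le 1-\gamma$, so $\alpha_t=0.4<\bar\alpha_t$. The paper meets the same obstruction and sidesteps it by replacing $L$ with $\tilde L=L/(2\delta(1-\gamma))$. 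This is still a valid Lipschitz constant, and it gives $\alpha_t\ge\min\{1,g_t/(\tilde L\bar\Delta^2)\}$. The lemma and Theorem~\ref{th:convergence} then hold with $\bar\alpha_t$ and the rate defined through $\tilde L$ instead of $L$.

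Your alternative keeps $L$ and notes that the theorem invokes \eqref{eq:1lb} only in Case~2, to get $\alpha_t=1$. This is the cleaner fix, and your hedge ``up to the factor $\rho$'' can be dropped. If $\bar\alpha_t=1$ then $g_t\ge L\bar\Delta^2$, so your own threshold gives $2(1-\gamma)g_t/(L\bar\Delta^2)\ge 2(1-\gamma)>1$, because $\gamma<\frac12$. The unit step therefore passes the Armijo test at $j=0$, and $\alpha_t=1=\bar\alpha_t$.

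So under Armijo, \eqref{eq:1lb} holds in every iteration with $\bar\alpha_t=1$. It can fail only when $\bar\alpha_t<1$, and there the theorem uses nothing but \eqref{eq:2ar}. Consequently Theorem~\ref{th:convergence} holds verbatim for the Armijo rule, with the original $L$, $\rho=\gamma\min\{1,2\delta(1-\gamma)\}$ and the original term $2(\widehat f_U(\theta^{(0)})-\widehat f_U^*)/T$. No enlarged Lipschitz constant is needed. The paper's rescaling buys only that the lemma can be stated in the uniform form \eqref{eq:1lb}--\eqref{eq:2ar}.
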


\begin{proof}
By the standard descent lemma~\cite[Proposition 6.1.2]{bertsekas2015convex}, for every
\(\alpha\in[0,1]\), we have
\begin{equation}\label{e11:stdnew}
\begin{aligned}
    \widehat f_U(\theta^{(t)})
    -
    \widehat f_U(\theta^{(t)}+\alpha d^{(t)})
    &\ge
    -
    \alpha
    \left\langle
        \nabla \widehat f_U(\theta^{(t)}),
        d^{(t)}
    \right\rangle
    -
    \alpha^2
    \frac{L}{2}
    \|d^{(t)}\|^2                                      \\
    &=
    -
    \alpha
    \left\langle
        \nabla_{\theta_A}\widehat f_U(\theta^{(t)}),
        d_A^{(t)}
    \right\rangle
    -
    \alpha
    \left\langle
        \nabla_{\theta_B}\widehat f_U(\theta^{(t)}),
        d_B^{(t)}
    \right\rangle                                      \\
    &\quad
    -
    \alpha^2
    \frac{L}{2}
    \left(
        \|d_A^{(t)}\|^2+\|d_B^{(t)}\|^2
    \right)                                            \\
    &=
    \alpha g_t
    -
    \alpha^2
    \frac{L}{2}
    \left(
        \|d_A^{(t)}\|^2+\|d_B^{(t)}\|^2
    \right).
\end{aligned}
\end{equation}

First, assume that \(\alpha_t=\bar\alpha_t\). Then the stepsize lower bound \eqref{eq:1lb} is trivially
satisfied. Moreover, from~\eqref{e11:stdnew}, it is immediate that
\begin{equation}\label{eq11:linnew}
    \alpha g_t
    -
    \alpha^2
    \frac{L}{2}
    \left(
        \|d_A^{(t)}\|^2+\|d_B^{(t)}\|^2
    \right)
    \ge
    \alpha \frac{g_t}{2}
\end{equation}
for every
\[
    0\le \alpha
    \le
    \frac{
        g_t
    }{
        L\left(
            \|d_A^{(t)}\|^2+\|d_B^{(t)}\|^2
        \right)
    }.
\]
We can apply~\eqref{eq11:linnew} to \(\bar\alpha_t\), since
\[
    0
    \le
    \bar\alpha_t
    \le
    \frac{g_t}{L\bar\Delta^2}
    \le
    \frac{
        g_t
    }{
        L\left(
            \|d_A^{(t)}\|^2+\|d_B^{(t)}\|^2
        \right)
    },
\]
where the last inequality follows from $\|d_A^{(t)}\|^2+\|d_B^{(t)}\|^2
    \le
    \bar\Delta^2$. Therefore,
\[
\begin{aligned}
    \widehat f_U(\theta^{(t)})
    -
    \widehat f_U(\theta^{(t+1)})
    =
    \widehat f_U(\theta^{(t)})
    -
    \widehat f_U(\theta^{(t)}+\bar\alpha_t d^{(t)})      \ge
    \bar\alpha_t\frac{g_t}{2}.
\end{aligned}
\]
Thus, the sufficient decrease \eqref{eq:2ar} condition holds with \(\rho=\frac12\).

Now assume that \(\alpha_t\) is determined by the Armijo line search rule. From
\eqref{e11:stdnew}, the Armijo condition $\widehat f_U(\theta^{(t)})
    -
    \widehat f_U(\theta^{(t)}+\alpha d^{(t)})
    \ge
    \gamma \alpha g_t$, 
is satisfied whenever
\[
    0
    \le
    \alpha
    \le
    2(1-\gamma)
    \frac{
        g_t
    }{
        L\left(
            \|d_A^{(t)}\|^2+\|d_B^{(t)}\|^2
        \right)
    }.
\]
By the standard backtracking argument, the accepted stepsize satisfies
\[
    \alpha_t
    \ge
    \min\left\{
        1,
        2\delta(1-\gamma)
        \frac{
            g_t
        }{
            L\left(
                \|d_A^{(t)}\|^2+\|d_B^{(t)}\|^2
            \right)
        }
    \right\}.
\]
Using again $    \|d_A^{(t)}\|^2+\|d_B^{(t)}\|^2
    \le
    \bar\Delta^2$, we get
\begin{equation}\label{1Ar}
\begin{aligned}
    \alpha_t \ge
    \min\left\{
        1,
        2\delta(1-\gamma)
        \frac{g_t}{L\bar\Delta^2}
    \right\} \ge
    \min\{1,2\delta(1-\gamma)\}\bar\alpha_t,
\end{aligned}
\end{equation}
we thus have $\alpha_t \geq
    \min\left\{
        1,c
        \frac{g_t}{L\bar\Delta^2}
    \right\}$,

for some $c>0$. We have two cases: if $c\geq1$ the lower bound \eqref{eq:1lb} is trivially satisfied. If $c<1$ we can still satisfy equation \eqref{eq:1lb} by considering $\tilde L = L/c$ instead of $L$ as Lipschitz constant. Finally, using the Armijo condition~\eqref{Armijo} and then~\eqref{1Ar}, we obtain
\[
\begin{aligned}
    \widehat f_U(\theta^{(t)})
    -
    \widehat f_U(\theta^{(t+1)})
    &=
    \widehat f_U(\theta^{(t)})
    -
    \widehat f_U(\theta^{(t)}+\alpha_t d^{(t)})          \ge
    \gamma \alpha_t g_t              \geq                \\
    &\ge
    \gamma
    \min\{1,2\delta(1-\gamma)\}
    \bar\alpha_t g_t.
\end{aligned}
\]
Hence the sufficient decrease condition \eqref{eq:2ar} holds with $ \rho = \gamma\min\{1,2\delta(1-\gamma)\}$.
\end{proof}

\section{Experimental results}
\label{sec:experiments}

\subsection{Example 1: Sparse autoencoder and latent dimension selection}

Autoencoding has a variety of different applications, from representation learning \cite{tschannen2018recentadvancesautoencoderbasedrepresentation}, dimensionality reduction \cite{hinton_autoencoder}, image denoising and generation \cite{bengio2013generalizeddenoisingautoencodersgenerative}, to neural network interpretability \cite{huben2024sparse,alain2017understanding}. The problem of autoencoding has a number of variants, but in its basic version, it consists of reconstructing the identity map on a dataset through the composition of learnable encoding and decoding functions as a solution of the optimization problem
$
\min_{\theta \in \mathcal C} \frac{1}{2} \| D \circ E(X)-Y \|_F^2,
$
where $E$ and $D$ are the encoder and decoder neural networks, respectively. In this experiment, we consider the asymmetric setting with $D_\phi(x) = Wx$, $W \in \R^{D \times d}$, and $E_{\theta}(x)$ is a feedforward neural network with a hyperbolic tangent activation function. In this setting, the autoencoding problem can be reformulated in the setting of \Cref{eq:bilevel_general_formulation} as
\begin{equation*}
    \begin{cases}
 \underset{e^\top s \leq \rho, s \geq 0,\phi \in \R^p}{\min} \| \widehat  W \diag(s) E_\phi(X_1) - X_1 \|_F^2   +\lambda \|s \|^2 \\
 s.t.\,\,\widehat  W \in \underset{W \in \R^{D \times d}}{\arg\min} \|W\diag(s) E_\phi(X_2) - X_2 \|_F^2,
\end{cases}
\end{equation*}
which fits exactly the formulation in \Cref{eq:bilevel_general_formulation} with $\theta = (s, \phi) \in \R^{d+p}$,$w = \mathrm{vec}(W)$, $M_U(\theta) = E_\phi(X_1)^\top \diag(s) \otimes I$,$M_L(\theta) = E_\phi(X_2)^\top \diag(s) \otimes I$, $y_U \equiv X_1, y_L \equiv X_2$ and $\mathcal C_A = B_{\|\cdot \|_1}(0,\rho] \cap \R^d_{\geq 0}, \mathcal C_B = \R^p$.

In \Cref{fig:autoencoder}, we report the results of the method presented in \Cref{alg:constrained_varpro} in the setting in which $E$ is a four-layer neural network with intermediate dimensions $[256,128,64,32]$, $\rho = 1, \lambda = 10^{-5}$, and the datasets $X_1,X_2$ are two splits of the MNIST training dataset. We compare the performance of Algorithm \ref{alg:constrained_varpro} with a standard projected gradient descent on the joint problem \ref{eq:joint_formulation} with a budget of $20$K optimization steps. As we can observe from the results in \Cref{fig:autoencoder}, despite the higher per-iteration computational cost, \algname{} is able to achieve a lower test reconstruction loss given a fixed time budget. We repeat the same experiment with the CIFAR10 dataset for $12$K optimization steps, with the only difference that $E$ is a four-layer neural network with doubled intermediate, i.e., $[512,256,128,64]$.

\begin{figure}[t]
    \centering
    \begin{tabular}{cc}
        \includegraphics[width=0.47\linewidth]{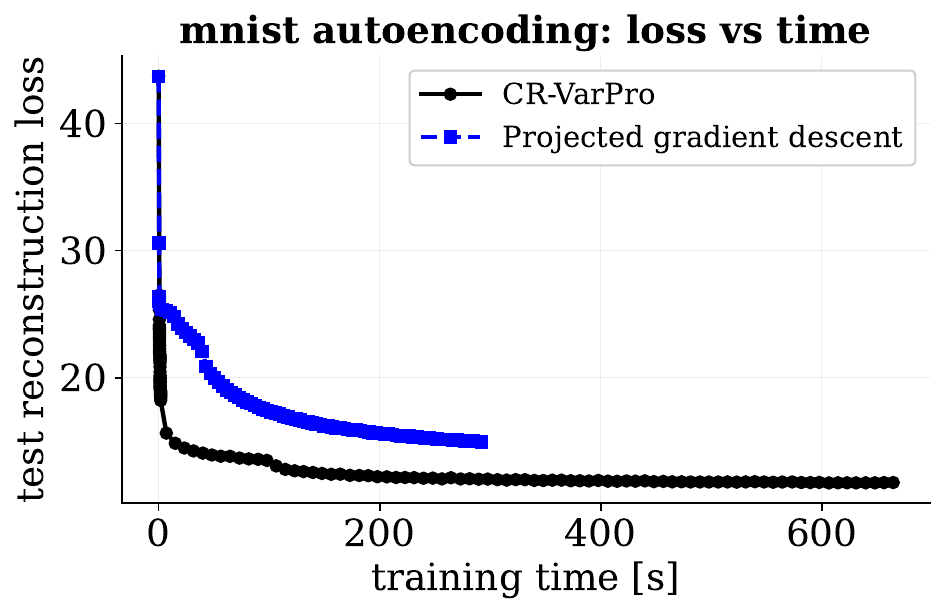} & 
        \includegraphics[width=0.47\linewidth]{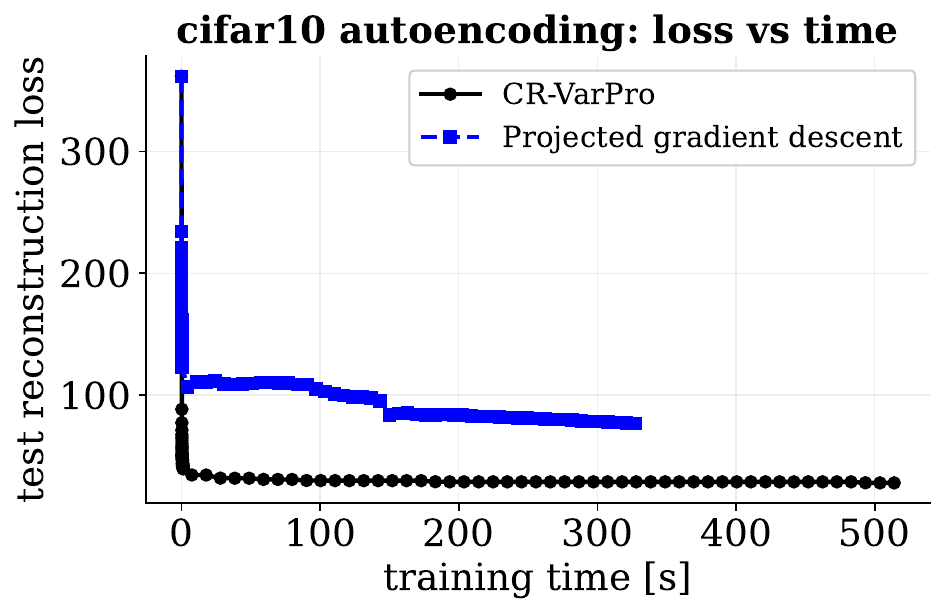}
        \\
        (a) MNIST \cite{lecun1998mnist}
        &
        (b) CIFAR10 \cite{krizhevsky2009learning}
    \end{tabular}
    \caption{Convergence of CR-VarPro against Projected gradient descent for the Autoencoding problem of MNIST and CIFAR10 on a fully connected neural network.}
    \label{fig:autoencoder}
\end{figure}

\subsection{Example 2: Dictionary learning}
We consider here the dictionary-learning problem \cite{Rubinstein_DL}, where the goal is to find a factorization of a given matrix $X \in \R^{D \times N}$ into a dictionary $D \in \R^{D \times d}$ and a norm-constrained representation matrix $R \in \R^{d \times N}$. This problem has many applications in data science and machine learning, including recommender systems \cite{dictionary_learning_reccommender}, data compression \cite{Rubinstein_DL}, and interpretability of large language models \cite{huben2024sparse}. We consider here the specialized setting in which we are interested in the reconstruction error in terms of the Frobenius norm
\begin{equation}\label{problem:Dictionary_learning_1}
\min_{D \in \R^{D \times d}, R \in \R^{d \times N}} \|DR-X \|^2 + \lambda \|\Omega R \|^2, \quad \text{s.t.}\,\, \|D \|_1 \leq \rho,
\end{equation}
where $\| \cdot \|_1$ indicates the entrywise $L^1$ norm. Notice that this problem is exactly a special case of the formulation \Cref{eq:varpro_general_formulation} with $\theta = \mathrm{vec}(D),M(\theta) = I \otimes D,w = \mathrm{vec}(R),y = \mathrm{vec}(X)$.
 In particular, the minimizers of \Cref{problem:Dictionary_learning_1} are minimizers of the bilevel-reformulated version
 \begin{equation}\label{problem:Dictionary_learning_2}
     \begin{cases}
         \underset{D \in \R^{D \times d}}{\min} \|D \widehat R-X \|^2+ \lambda \|\Omega \widehat R\|^2, \\
         \text{s.t.}\quad \widehat R \in \underset{R \in \R^{d \times N}}{\arg\min} \|DR-X \|^2 + \lambda \|\Omega R \|^2 , \quad \|D \|_1 \leq \rho .
     \end{cases}
 \end{equation}

 The lower-level optimization problem can be solved in closed form, leading to $\widehat R=(D^\top D + \lambda \Omega^\top \Omega)^{-1} D^\top X$. In particular, \Cref{problem:Dictionary_learning_2} can be reformulated as the following optimization problem
 \begin{equation}\label{eq:bilevel_dictionary_learning}
 \begin{cases}
      \underset{D \in \R^{D \times d}}{\min} \widehat f(D):=\|D(D^\top D + \lambda \Omega^\top \Omega)^{-1} D^\top X-X \|^2 + \lambda \|\Omega (D^\top D + \lambda \Omega^\top \Omega)^{-1} D^\top X \|^2, \\
      \text{s.t.}\,\, \quad \|D \|_1 \leq \rho .
 \end{cases}
 \end{equation}
 For this numerical experiment, we set $N =  3000, d = 128, D = 784$, $\Omega = I, \rho = 1, \lambda = 10^{-5}$ and $X \in \R^{D \times N}$ is a randomly sampled subset of the MNIST dataset \cite{lecun1998mnist}.
 In \Cref{fig:dict_learning_numerical} we present the numerical results of \Cref{eq:bilevel_dictionary_learning}, comparing \Cref{alg:constrained_varpro} with Projected gradient descent on the joint problem \Cref{problem:Dictionary_learning_1} for a total of $10$K optimization steps (with objective value reported every $25$ steps). 

\begin{figure}[t]
    \centering
    \begin{tabular}{c}
        \includegraphics[width=0.6\linewidth]{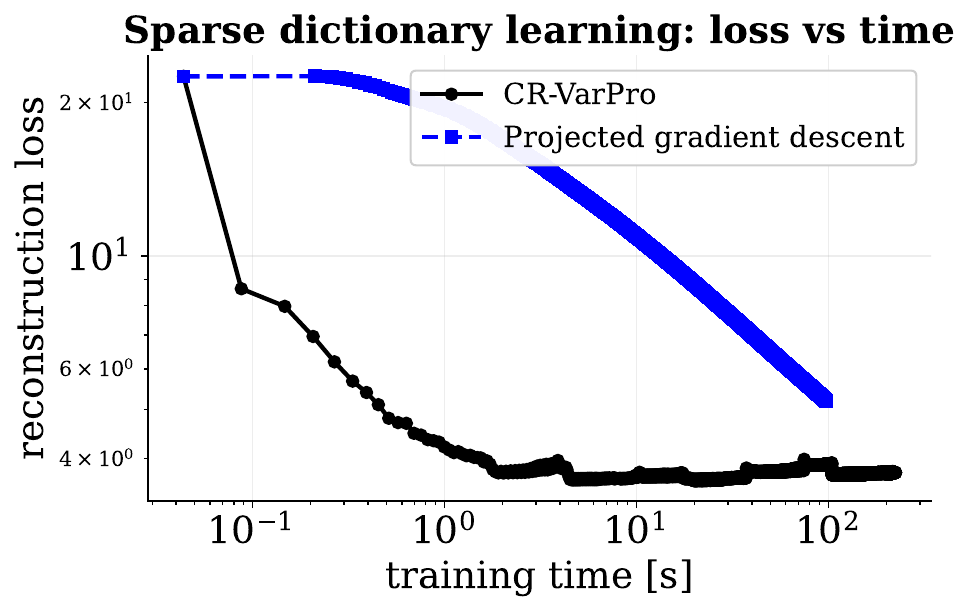}
    \end{tabular}
    \caption{Convergence of CR-VarPro against Projected gradient descent for the Dictionary Learning problem. The number of iterations is the same for the two methods, a budget of $10000$ optimization steps.}
    \label{fig:dict_learning_numerical}
\end{figure}
 \subsection{Example 3: Blind deconvolution}\label{subsec:blind_deconvolution}
Blind deconvolution is a common problem in image processing, which requires reconstructing a corrupted signal without prior knowledge on the smoothing process. More precisely, assume we are given a batch of $N$ measured signals $Y \in \R^{D \times N}$, and we know that they have been corrupted by a smoothing process and additional noise $y_i = w  *x_i + \varepsilon_i,$
and we assume to have no knowledge on the kernel $w$. The problem of blind deconvolution is trying to recover the batch of original signals $X = [x_1,\dots,x_N]$ given the corrupted ones $Y = [y_1,\dots,y_N]$. We remark that, even without any additional noise (i.e., $\varepsilon_i=0$) the problem is still ill posed. In particular, assuming $\varepsilon_i=0$, the problem becomes to find $x_i,w$ such that $w * x_i = y_i$ for all $i=1,\dots,N$. However, note  that given a pair $(x_i^\star,w^\star)$ that solves the problem, then $(\alpha x_i^\star,\alpha^{-1}w^\star)$ is still a solution for all $\alpha \ne 0$. More than this scalar invariance (which already produces a continuum of solutions), there is a full diagonal invariance, as
$\mathcal F(w * x_i) = \mathcal F(w) \odot \mathcal F(x_i) = \mathcal F(y_i)$
and any couple $(\mathcal F(w) \odot d, \mathcal F(x_i) \odot d^{\odot -1})$ is still a solution. In order to make the recovery problem well-posed, some additional constraints are typically needed. Assuming that the noise $\varepsilon_i \underset{i.i.d}{\sim} p$, and assuming that we have a prior knowledge on $w \sim q$ and we know that the original signal is constrained $x_i \in \mathcal C$ (e.g., box constraint or norm-based), reconstruction can be posed as a constrained maximum a posteriori estimation
\[
\min_{w,x_i \in \mathcal C} \sum_{i=1}^Np(y_i-w* x_i) +  q(w).
\]
In the specific case in which we assume a Gaussian prior $q(w) \propto e^{-\lambda \| w\|_2^2}$ and we assume that the noise is Gaussian as well, we get the objective function
$
\min_{W \in \mathscr W,X \in \mathcal C} \|WX-Y \|_F^2 + \lambda \|W \|_F^2,
$
leading back to the problem formulation we presented in \Cref{sec:structured_varpro}. In particular, if we assume to use circular convolution and $w \in \R^{D}, X \in \R^{D \times N}$, the problem can be solved in closed form in $W$ in the Fourier domain, and it reduces to a non-linear constrained minimization problem in $X$,
\begin{equation}\label{eq:blind_deconvolution_formulation}
\min_{X \in \mathcal C}  \| w^*(X) * X- Y ||_F^2 + \lambda \|w^*(X) \|_F^2, \quad w^*(X):=\mathcal F^{-1}\left\{\frac{\mathrm{diag}(\mathcal F\{ Y\}\mathcal F\{ X\}^H)}{\mathrm{diag}(\mathcal F\{ X\} \mathcal F\{ X\}^H + \lambda I)}
\right\},\end{equation}
which can be solved using the Frank-Wolfe algorithm. In \Cref{fig:blind_deconvolution}, we present convergence results (wall-time against objective function) for \Cref{alg:constrained_varpro} and Projected gradient descent. To do this, we considered the specific instance of the problem presented in \Cref{eq:blind_deconvolution_formulation} in the case in which $\mathcal C_A:= B_{\| \cdot \|_F}(0,\rho), \mathcal C_B = \{ 0\}$.
In \Cref{fig:blind_deconvolution} we report the results of wall time against objective function value for the blind deconvolution problem, for a total of $5$K iterations $\rho = 1, N = 50, D = 128, \lambda = 10^{-3}$.

\begin{figure}[t]
    \centering
    \includegraphics[width=0.6\linewidth]{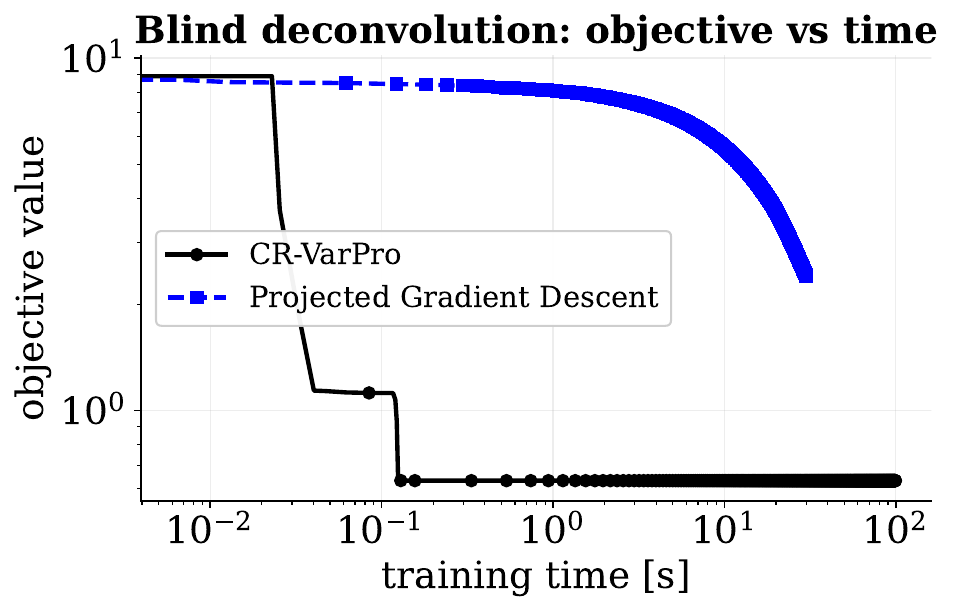}
    \caption{Convergence of CR-VarPro against Projected gradient descent for the blind deconvolution problem.}
    \label{fig:blind_deconvolution}
\end{figure}

\subsection{Example 4: Few-shot learning}

As a final numerical experiment, we test the performance of \Cref{alg:constrained_varpro} in few-shot learning on CIFAR10 \cite{krizhevsky2009learning} starting from a pretrained ResNet-18 \cite{resnet}, and compare it with other fine-tuning approaches. In particular, the few-shot learning problem fits the formulation \Cref{eq:bilevel_general_formulation} with $M_U(\theta) = \phi_\theta(X_1)^\top \otimes I,M_L(\theta) = \phi_\theta(X_2)^\top \otimes I, w = \mathrm{vec}(W)$ where $\phi$ is the backbone of the Resnet-18 architecture (all layers but the final linear classifier), and $X_1,X_2 \in \R^{32 \times 32 \times N}$ are two different splits of the CIFAR10 dataset. In this setting, $\theta$ is not composed of all trainable parameters of the backbone, but just the parameters of the fourth layer. For this particular experiment, we use a 10-way 1-shot setup, i.e., $N = 10$, and we use one example per class on each split. We compare the performance of \algname{} with fine-tuning only the last linear layer using an $L^2$ loss (Frozen ridge in \Cref{fig:few_shot_learning}), with cross-entropy loss (CE linear probe in \Cref{fig:few_shot_learning}), and by fine-tuning the last linear layer and the fourth layer of $\phi$.
In \Cref{fig:few_shot_learning}, we compare the final test accuracy of each method against the effective training time (not accounting for full accuracy calculation) in seconds, from which we can observe the effectiveness of the bilevel formulation in terms of data efficiency, training time, and overall performance.

\begin{figure}[t]
    \centering
    \includegraphics[width=0.65\linewidth]{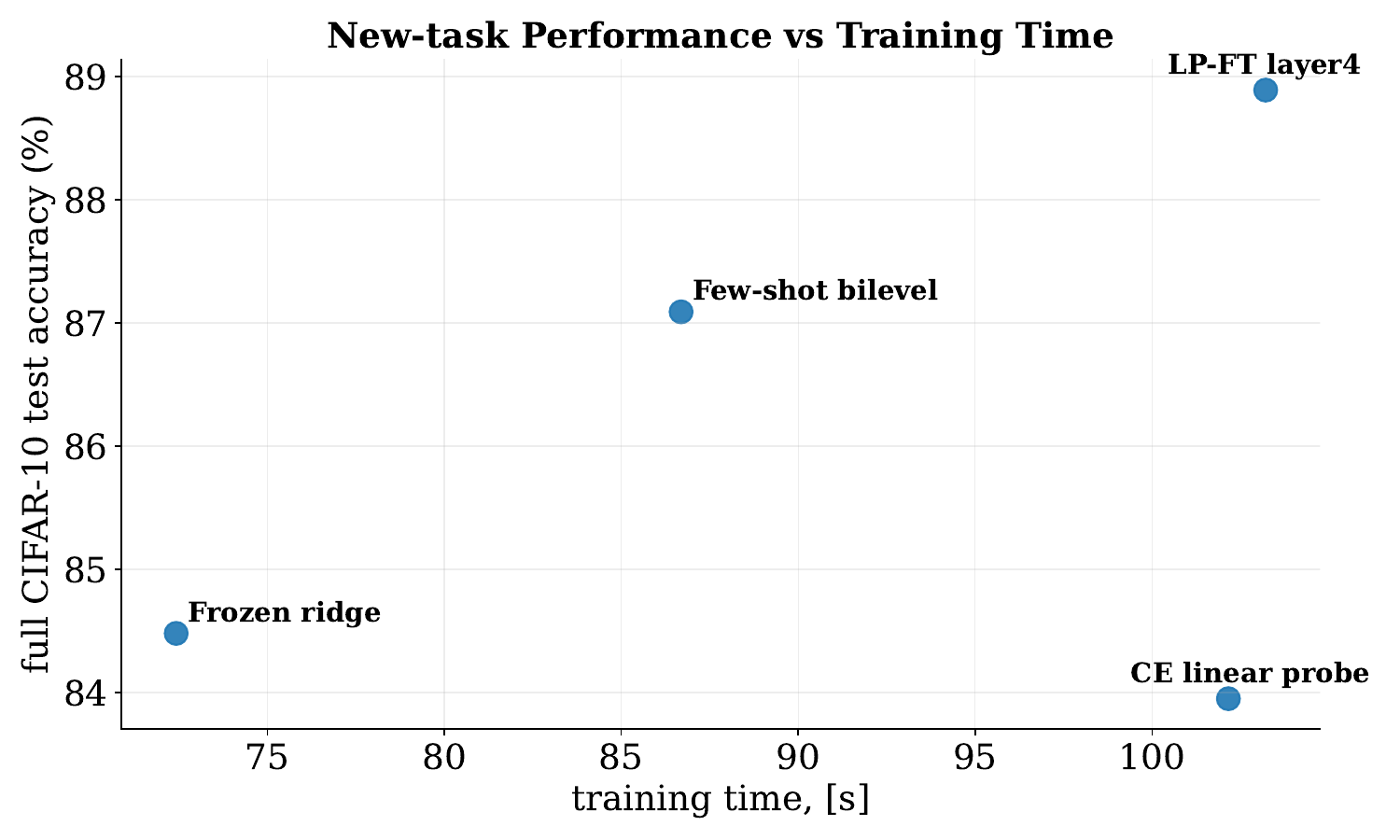}
    \caption{Fine tuning results ResNet-18 on CIFAR10. Frozen Ridge fits only the linear head as a solution of an $L^2$ regression problem, CE linear probe fits only the linear head through cross-entropy minimization, LP-FT layer4 fine-tunes the linear probe and the fourth intermediate layer. Few-shot bilevel performs 10-way 1-shot 1-query learning using the bilevel formulation in \Cref{eq:bilevel_general_formulation}.}
    \label{fig:few_shot_learning}
\end{figure}

\section{Conclusions}
\label{sec:conclusions}

We developed a constrained variable-projection framework for structured data-science models in which a least-squares block is eliminated exactly and the remaining variables are optimized over a convex feasible set. By interpreting variable projection as a collapsed bilevel problem, we derived reduced-gradient formulas that combine closed-form lower-level solves with automatic differentiation through vector-Jacobian products. This yields exact hypergradients without differentiating naively through normal equations, and naturally accommodates extensions in which the eliminated variable has additional affine structure.

We proposed a projection-free conditional-gradient method for the resulting reduced problem, combining a Frank-Wolfe oracle on the constrained block with a normalized descent step on unconstrained variables. Under standard smoothness and boundedness assumptions, we established convergence to first-order stationary points in terms of a Frank–Wolfe-type gap. The numerical experiments on sparse autoencoding, dictionary learning, blind deconvolution, and few-shot learning indicate that constrained variable projection can improve computational efficiency and data efficiency relative to natural joint-optimization baselines. Future work includes inexact lower-level solves, stochastic variants, and broader classes of structured lower-level constraints.

\paragraph{Acknowledgements}
The work of FT is partially funded by the PRIN-MUR project MOLE code 2022ZK5ME7 and by the PRIN-PNRR project FIN4GEO within the European Union's Next Generation EU framework, Mission 4, Component 2, CUP P2022BNB97. The work of E. Zangrando was funded by the MUR-PNRR project “Low-parametric machine learning”.

\bibliographystyle{siamplain}
\bibliography{references}

\end{document}